\definecolor{Red}{rgb}{1,0,0}
\definecolor{Green}{rgb}{0,.6,0}
\definecolor{Blue}{rgb}{0,0,1}
\newcommand{\bl}{\color{black}} 
\theoremstyle{definition}
\newtheorem{definition}{Definition}[section]
\theoremstyle{remark}
\newtheorem{remark}{Remark}[section]
\theoremstyle{plain}
\newtheorem{lemma}{Lemma}[section]
\theoremstyle{plain}
\newtheorem{theorem}{Theorem}[section]
\theoremstyle{plain}
\providecommand{\keywords}[1]
{
  \small
  \textbf{Key words.} #1
}
\newcommand{\nomad}{{\sf NOMAD}\xspace}
\title{Efficient search strategies for constrained multiobjective blackbox optimization}
\author{
  \href{mailto:sebastien.le-digabel@polymtl.ca}{\bl S\'ebastien Le~Digabel}\thanks{
    GERAD and D\'epartement de math\'ematiques et de g\'enie industriel,
    \href{https://polymtl.ca}{\bl Polytechnique Montr\'eal},
    C.P. 6079, Succ. Centre-ville,
    Montr\'eal, Qu\'ebec, Canada H3C~3A7.
    \href{mailto:sebastien.le-digabel@polymtl.ca}{\bl \url{sebastien.le-digabel@polymtl.ca}},
    \href{https://www.gerad.ca/Sebastien.Le.Digabel}{\bl \url{www.gerad.ca/Sebastien.Le.Digabel}}.
  }
  \and
  \href{mailto:antoine.lesage-landry@polymtl.ca}{\bl Antoine Lesage-Landry}\thanks{
    Mila, GERAD, and D\'epartement de g\'enie \'electrique,
    \href{https://polymtl.ca}{\bl Polytechnique Montr\'eal},
    C.P. 6079, Succ. Centre-ville,
    Montr\'eal, Qu\'ebec, Canada H3C~3A7.
    \href{mailto:antoine.lesage-landry@polymtl.ca}{\bl \url{antoine.lesage-landry@polymtl.ca}},
    \href{https://www.gerad.ca/people/antoine-lesage-landry}{\bl \url{www.gerad.ca/antoine-lesage-landry}}.
  }
  \and
  \href{mailto:ludovic.salomon@polymtl.ca}{\bl Ludovic Salomon}\thanks{
    GERAD and D\'epartement de math\'ematiques et de g\'enie industriel,
    \href{https://polymtl.ca}{\bl Polytechnique Montr\'eal},
    C.P. 6079, Succ. Centre-ville,
    Montr\'eal, Qu\'ebec, Canada H3C~3A7.
    \href{mailto:ludovic.salomon@polymtl.ca}{\bl \url{ludovic.salomon@polymtl.ca}},
  }
  \and
  \href{mailto:christophe.tribes@polymtl.ca}{\bl Christophe Tribes}\thanks{
    GERAD and D\'epartement de math\'ematiques et de g\'enie industriel,
    \href{https://polymtl.ca}{\bl Polytechnique Montr\'eal},
    C.P. 6079, Succ. Centre-ville,
    Montr\'eal, Qu\'ebec, Canada H3C~3A7.
    \href{mailto:christophe.tribes@polymtl.ca}{\bl \url{christophe.tribes@polymtl.ca}},
    \href{https://www.gerad.ca/people/christophe-tribes}{\bl \url{www.gerad.ca/Christophe.Tribes}}.
  }
}
\date{}
\pgfplotsset{compat=1.18}
\begin{document}

\maketitle

\begin{abstract}
  Multiobjective blackbox optimization deals with problems where the objective and constraint functions are the outputs of a numerical simulation. In this context, no derivatives are available, nor can they be approximated by finite differences, which precludes the use of classical gradient-based techniques. The DMulti-MADS algorithm implements a state-of-the-art direct search procedure for multiobjective blackbox optimization based on the mesh adaptive direct search (MADS) algorithm. Since its conception, many search strategies have been proposed to improve the practical efficiency of the single-objective MADS algorithm. Inspired by this previous research, this work proposes the integration of two search heuristics into the DMulti-MADS algorithm. The first uses quadratic models, built from previously evaluated points, which act as surrogates for the true objectives and constraints, to suggest new promising candidates. The second exploits the sampling strategy of the Nelder-Mead algorithm to explore the decision space for new non-dominated points. Computational experiments on analytical problems and three engineering applications show that the use of such search steps considerably improves the performance of the DMulti-MADS algorithm.
\end{abstract}

\keywords{Multiobjective optimization, derivative-free optimization (DFO), blackbox optimization (BBO), constrained optimization}

\section{Introduction}

This work considers the following constrained multiobjective {\bl optimization} problem
\begin{equation} \label{ref:MOP} \tag{MOP}
  \min_{\mathbf{x} \in \Omega} f(\mathbf{x}) = \left(f_1(\mathbf{x}), f_2(\mathbf{x}), \ldots, f_m(\mathbf{x})\right)^\top.
\end{equation}
\(\Omega = \{\mathbf{x} \in \mathcal{X} : c_j(\mathbf{x}) \leq 0, \ j \in \mathcal{J}\} \subset \mathbb{R}^n\) represents the \textit{feasible decision set}, and \(\mathcal{X}\) is a subset of \(\mathbb{R}^n\). The sets \(\mathbb{R}^n\) and \(\mathbb{R}^m\) are, respectively, designed as the \textit{decision space} and the \textit{objective space}. Each objective component \(f_i : \mathbb{R}^n \rightarrow \mathbb{R} \cup \{+ \infty\}\), \(i = 1,2,\ldots, m\), and constraints \(c_j : \mathbb{R}^n \rightarrow \mathbb{R} \cup \{+ \infty\}\), \(j \in \mathcal{J}\) are blackbox functions, for which derivatives are not available nor cannot be approximated numerically. Blackbox optimization (BBO) algorithms aim to solve such problems.

In the absence of analytical expressions, one cannot exploit certain properties of the problem (e.g., continuity, convexity, or differentiability), justifying the use of derivative-free optimization (DFO)~\cite{AuHa2017, CoScVibook, LaMeWi2019}, a more general case of BBO.
Many engineering applications that face several conflicting, expensive, and/or unreliable objectives fit into this framework, e.g.,~\cite{Alexandropoulos2019, DiPiKhuSaBe2009, Fang2005, Sharma2013}. For such problems, an optimal solution for all objectives does not always exist. The goal is then to provide the practitioners with the best set of trade-off solutions from which they can extract a particular solution according to their preferences~\cite{Branke2008Multiobjective, Collette2005, Miettinen_99_a}.

Prior work has already proposed several deterministic algorithms to address this class of problems. A first approach is to reformulate the original multiobjective optimization problem into a series of single-objective subproblems, each of which is solved by a dedicated single-objective blackbox solver. Direct search algorithms such as BiMADS~\cite{AuSaZg2008a} and MultiMADS~\cite{AuSaZg2010a} for biobjective and multiobjective optimization, respectively, rely on the Mesh Adaptive Direct Search (MADS)~\cite{AuDe2006, AuDe09a} algorithm for single-objective constrained optimization. The BiObjective Trust-Region algorithm~\cite{Ryubiobjdftr} adopts a derivative-free trust-region method. Methods with a posteriori articulation of preferences work directly with the original formulation, in contrast to the scalarization-based approaches mentioned above. The Direct MultiSearch (DMS) framework~\cite{CuMaVaVi2010} and its variants~\cite{BiLedSa2020, G-2022-10, CuMa2018, DedDesNa2021} generalize single-objective direct search algorithms to multiobjective optimization. The Derivative-Free MultiObjective algorithm~\cite{LiLuRi2016} is a linesearch-based approach, while the MultiObjective Implicit Filtering method~\cite{moif2018} can be viewed as a multiobjective extension of the implicit filtering algorithm~\cite{Kelley2011}.

Direct search methods are DFO iterative methods. At each iteration, they examine a finite set of trial candidates to try to improve a current (set of) incumbent solution(s). If an improvement is found, the current (set of) incumbent solution(s) is updated. Otherwise, the associated step size parameter of the current incumbent is decreased, and a new iteration is started. Most efficient direct search algorithms for single-objective optimization such as MADS~\cite{AuDe2006} or Generated Search Set~\cite{KoLeTo03a} alternate between search and poll steps.
Search strategies, although not mandatory for convergence, significantly enhance the practical performance of direct search algorithms.
Thus, many search strategies have been proposed for single-objective optimization: speculative search~\cite{AuDe2006}, quadratic models~\cite{CoLed2011,CuRoVi10,CuVi07}, Variable Neighbourhood Search~\cite{AuBeLe08}, general surrogate models~\cite{AuKoLedTa2016,TaAuKoLed2016}, or a Nelder-Mead (NM) based search strategy~\cite{AuTr2018}.

In multiobjective optimization, the integration of search strategies can take two forms. Scalarization-based approaches such as BiMADS~\cite{AuSaZg2008a} or MultiMADS~\cite{AuSaZg2010a} incorporate the direct search single-objective solver MADS, which in turns implements a certain number of single-objective search heuristics. However, computational experiments~\cite{BiLedSa2020, G-2022-10, CuMaVaVi2010} have shown that even with these search strategies activated, the algorithms have similar performance with state-of-the-art direct search methods with a posteriori articulation of preferences using only polling.
References~\cite{BraCu2020, CuMa2018} propose ``native'' search strategies specifically designed for multiobjective direct search algorithms. 

This work proposes two new native search strategies for the multiobjective direct search algorithm DMulti-MADS~\cite{BiLedSa2020, G-2022-10}. The first one is based on quadratic models in the lineage of~\cite{BraCu2020}. The second search method is an adaptation of a NM strategy proposed in~\cite{AuTr2018} for multiobjective optimization.

Note that the use of surrogate models in the context of multiobjective blackbox optimization is not new. The following works~\cite{Akhtar2016, Mueller2017, Regis2016a} combine radial basis functions with evolutionary strategies. Some research has also explored Gaussian processes within a Bayesian multiobjective framework, e.g.,~\cite{Bradford2018, FeBeVa2017}. For such methods, convergence analysis is not well established, or at most in probability (see, for example,~\cite{Regis2016b}). On the contrary, the following works~\cite{Ryubiobjdftr, MoHaGa2025, PrThEiBoSc2020, Thomann2019, ThEich2019} describe multiobjective algorithms using quadratic models within a trust-region framework with convergence guarantees. However, these algorithms are limited to unconstrained or bound-constrained multiobjective optimization.

Researchers have also proposed NM-based algorithms for multiobjective optimization. The authors in~\cite{ZaCol2015} combine a NM strategy with a Chebyshev scalarization. In~\cite{MeDas2020}, the authors adopt an evolutionary strategy based on the NM simplex ordering. However, all these methods do not provide convergence guarantees and are not deterministic, potentially making them less robust that existing convergent methods. 

Based on these considerations, this work:
\begin{itemize}
    \item proposes two search strategies for constrained multiobjective blackbox optimization, motivated by their practical performance already observed in a single-objective context;
    \item investigates quadratic-based formulations other than the ones proposed by~\cite{BraCu2020}, whose main drawback is the number of subproblems to solve, which increases exponentially with the number of objectives. However, this issue remains limited for a low number of objectives, i.e., \(m \leq 4\). The authors have also investigated the use of parallelism to speed up this search~\cite{Tavares2023}. The formulations used in this work exploit the nature of the incumbent iterate, resulting in less problems to solve (i.e., at most linear in the number of objectives), and keeping a performance similar to the search strategy proposed in~\cite{BraCu2020}.
\end{itemize}

This document is organized as follows. Sections~\ref{sect:Pareto_dominance} and~\ref{sect:necessary_optimality_conditions}  summarize the main notations and multiobjective concepts, and optimality conditions. Section~\ref{sect:Presentation_DMulti_MADS} provides a general presentation of the DMulti-MADS algorithm. New single-objective formulations are given in Section~\ref{sect:Single-objective reformulations}. The new search strategies are then proposed in Section~\ref{sect:Search steps}. This work concludes with numerical experiments in Section~\ref{sect:Numerical_experiments}, which illustrate the potential of such new strategies in analytical benchmarks and three applications. It ends with a general conclusion.

\section{Notations and Pareto dominance}
\label{sect:Pareto_dominance}

This work adopts the following conventions. Given two vectors \(\mathbf{y}^1, \ \mathbf{y}^2 \in \mathbb{R}^m\),
\begin{itemize}
    \item \(\mathbf{y}^1 = \mathbf{y}^2 \Longleftrightarrow y^1_i = y^2_i\) for \(i = 1, 2, \ldots, m\);
    \item \(\mathbf{y}^1 < \mathbf{y}^2 \Longleftrightarrow y^1_i < y^2_i\) for \(i = 1, 2, \ldots, m\);
    \item \(\mathbf{y}^1 \leq \mathbf{y}^2 \Longleftrightarrow y^1_i \leq y^2_i\) for \(i = 1, 2, \ldots, m\) and there exists at least an index \(i_0 \in \{1, 2, \ldots, m\}\) such that \(y^1_{i_0} < y^2_{i_0}\).
\end{itemize}

In multiobjective optimization, as the objective function possesses several components, one needs the concept of \textit{Pareto dominance}~\cite{Miettinen_99_a} to compare a pair of points.

\begin{definition}
    Let two feasible decision vectors \(\mathbf{x}^1\) and \(\mathbf{x}^2\) in \(\Omega\).
    \begin{itemize}
        \item \(\mathbf{x}^1\) \textit{dominates} \(\mathbf{x}^2\) (\(\mathbf{x}^1 \prec \mathbf{x}^2\)) if and only if \(f(\mathbf{x}^1) \leq f(\mathbf{x}^2)\).
        \item \(\mathbf{x}^1\) \textit{strictly dominates} \(\mathbf{x}^2\) (\(\mathbf{x}^1 \prec \prec \mathbf{x}^2\)) if and only if \(f(\mathbf{x}^1) < f(\mathbf{x}^2)\).
        \item \(\mathbf{x}^1\) and \(\mathbf{x}^2\) are \textit{incomparable} (\(\mathbf{x}^1 \sim \mathbf{x}^2\)) if and only if \(\mathbf{x}^1 \nprec \mathbf{x}^2\) and \(\mathbf{x}^2 \nprec \mathbf{x}^1\), i.e. \(f(\mathbf{x}^1) \nleq f(\mathbf{x}^2)\) and \(f(\mathbf{x}^2) \nleq f(\mathbf{x}^1)\).
    \end{itemize}
\end{definition}

This definition can be extended to objective vectors, i.e., points in the objective space \(\mathbb{R}^m\). It is now possible to define what an optimal solution for (\ref{ref:MOP}) is.

\begin{definition}
  A feasible decision vector \(\mathbf{x}^\star \in \Omega\) is said to be (globally) \textit{Pareto optimal} if there is no other decision vector \(\mathbf{x}\in \Omega\) such that \(\mathbf{x} \prec \mathbf{x}^\star\).
\end{definition}

The set of all Pareto optimal solutions in \(\Omega\) is called the \textit{Pareto set} denoted by \(\mathcal{X}_\text{P}\) and its image by the objective function is designated as the \textit{Pareto front} denoted by \(\mathcal{Y}_\text{P} \subseteq \mathbb{R}^m\).

\begin{definition}
  A feasible decision vector \(\mathbf{x}^\star \in \Omega\) is said to be \textit{locally Pareto optimal} if it exists a neighbourhood \(\mathcal{N}(\mathbf{x}^\star)\) of \(\mathbf{x}^\star\) such that there is no other decision vector \(\mathbf{x} \in \mathcal{N}(\mathbf{x}^\star) \cap \Omega\) which satisfies \(\mathbf{x} \prec \mathbf{x}^\star\).
\end{definition}

Any set of locally Pareto optimal solutions is called a \textit{local Pareto set}. The Pareto set may contain an infinite number of solutions~\cite{Collette2011Multiobjective}, making its enumeration impractical. Furthermore, obtaining even locally optimal solutions in a DFO context is a challenging task. Algorithms try to find a representative set of non-dominated points, i.e., a \textit{Pareto set approximation}~\cite{ZiKnTh2008}, whose mapping by the objective function \(f\) is designed as a \textit{Pareto front approximation}. In the best case, all elements of a Pareto set approximation should be (locally) Pareto optimal, but this condition is not always satisfied.

The two following objective vectors provide information on the range of the Pareto front. The \textit{ideal objective vector}~\cite{Miettinen_99_a} \(\mathbf{y}^\text{I} \in \mathbb{R}^m\) (if it exists) bounds the Pareto front from below and is defined as
\[\mathbf{y}^\text{I} = \left(\min_{\mathbf{x} \in \Omega} f_1(\mathbf{x}), \min_{\mathbf{x} \in \Omega} f_2(\mathbf{x}), \ldots, \min_{\mathbf{x} \in \Omega} f_m(\mathbf{x}) \right)^\top.\]
The ideal objective vector is related to the \textit{extreme points of the Pareto set}, i.e., the elements of the Pareto set that are the solutions of each single-objective problem \(\min_{\mathbf{x} \in \Omega} f_i(\mathbf{x})\) for \(i = 1,2, \ldots, m\).
The \textit{nadir objective vector}~\cite{Miettinen_99_a} \(\mathbf{y}^{\text{N}} \in \mathbb{R}^m\) (if it exists) provides an upper bound on the Pareto front. It is defined as
\[\mathbf{y}^{\text{N}} = \left(\max_{\mathbf{x} \in \mathcal{X}_P} f_1(\mathbf{x}), \max_{\mathbf{x} \in \mathcal{X}_\text{P}} f_2(\mathbf{x}), \ldots, \max_{\mathbf{x} \in \mathcal{X}_\text{P}} f_m(\mathbf{x})\right)^\top.\]

\section{Necessary optimality conditions}
\label{sect:necessary_optimality_conditions}
This section contains a summary of the necessary optimality conditions for multiobjective optimization used in the rest of this work. Clarke nonsmooth analysis~\cite{Clar83a} provides a rigorous framework for studying derivative-free multiobjective methods in the constrained case.

The Clarke tangent cone is a generalization of the tangent cone used in classical differentiable nonlinear optimization. Roughly speaking, it defines a set of directions in the decision space that an algorithm can use to move from one feasible solution to another. The definition and notations are taken from~\cite{AuDe2006}.

\begin{definition}[Clarke tangent vector]
    A vector \(\mathbf{d} \in \mathbb{R}^n\) is said to be a Clarke tangent vector to the set \(\Omega \subseteq \mathbb{R}^n\) at the point \(\mathbf{x}\) in the closure of \(\Omega\) if for every sequence \(\{\mathbf{y}^k\}\) of elements of \(\Omega\) that converge to \(\mathbf{x}\) and every sequence of positive real numbers \(\{t^k\}\) converging to zero, there exists a sequence of vectors \(\{\mathbf{w}^k\}\) converging to \(\mathbf{d}\) such that \(\mathbf{x}^k + t^k \mathbf{w}^k \in \Omega\).
\end{definition}

The set of all Clarke tangent vectors to \(\Omega\) at \(\mathbf{x}\) is called the Clarke tangent cone to \(\Omega\) at \(\mathbf{x}\), denoted as \(\mathcal{T}^{\text{Cl}}_\Omega(\mathbf{x})\). The hypertangent cone is the interior of the Clarke tangent cone, assuming that the latter exists. It is often used in the convergence analysis of direct search methods.

\begin{definition}[Hypertangent vector]
    A vector \(\mathbf{d} \in \mathbb{R}^n\) is said to be a hypertangent vector to the set \(\Omega \subseteq \mathbb{R}^n\) at \(\mathbf{x}\) in \(\Omega\) if there exists a scalar \(\epsilon > 0\) such that
    \[\mathbf{y} + t\mathbf{w} \in \Omega, \forall \mathbf{y} \in \Omega \cap \mathcal{B}(\mathbf{x}; \epsilon), \mathbf{w} \in \mathcal{B}(\mathbf{x}; \epsilon) \text{ and } 0 < t < \epsilon.\]
\end{definition}

The set of all hypertangent vectors at \(\mathbf{x} \in \Omega\) is called the hypertangent cone at \(\mathbf{x}\) in \(\Omega\), denoted as \(\mathcal{T}^\text{H}_\Omega(\mathbf{x})\).

Assuming that \(f\) is Lipschitz continuous near \(\mathbf{x} \in \Omega\), i.e., each component \(f_i\) for \(i = 1, 2, \ldots, m\) of the objective function is Lipschitz continuous near \(\mathbf{x}\), one can define the Clarke-Jahn generalized derivative~\cite{Jahn2007} of each objective component in the direction \(\mathbf{d}\) belonging to the hypertangent cone at \(\mathbf{x}\) in \(\Omega\):
\[f^\text{o}_i(\mathbf{x}; \mathbf{d}) = \underset{\begin{array}{c}\mathbf{y} \rightarrow \mathbf{x}, \ \mathbf{y} \in \Omega \\ t \downarrow 0, \ \mathbf{y} + t \mathbf{d} \in \Omega\end{array}}{\lim \sup} \dfrac{f_i(\mathbf{y} + t\mathbf{d}) - f(\mathbf{y})}{t}, \text{ for all } i = 1, 2, \ldots, m.\]
The Clarke-Jahn generalized derivative of objective \(f_i\) in the direction \(\mathbf{v}\) belonging to the Clarke tangent cone at \(\mathbf{x}\) in \(\Omega\) can be found by going to the limit, i.e., \(f^\text{o}_i(\mathbf{x}; \mathbf{v}) = \underset{\begin{array}{c}\mathbf{d} \in \mathcal{T}^\text{H}_{\Omega}(\mathbf{x}) \\ \mathbf{d} \rightarrow \mathbf{v}\end{array}}{\lim} f^\text{o}_i(\mathbf{x};\mathbf{d})\)~\cite{AuDe2006}.

It is now possible to give the following main necessary condition for nonsmooth multiobjective optimization.

\begin{theorem}
    Let \(f\) be Lipschitz continuous near \(\hat{\mathbf{x}} \in \Omega\). If \(\hat{\mathbf{x}}\) is locally Pareto optimal, then for any direction \(\mathbf{d} \in \mathcal{T}^\text{Cl}_{\Omega}(\hat{\mathbf{x}})\), there exists at least one index \(i \in \{1, 2, \ldots, m\}\) such that:
    \[f^\text{o}_i(\hat{\mathbf{x}}; \mathbf{d}) \geq 0.\]
\end{theorem}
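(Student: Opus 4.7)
The plan is to argue by contradiction. Suppose there exists $\mathbf{d} \in \mathcal{T}^{\text{Cl}}_{\Omega}(\hat{\mathbf{x}})$ with $f^{\text{o}}_i(\hat{\mathbf{x}}; \mathbf{d}) < 0$ for every $i \in \{1, 2, \ldots, m\}$, and exhibit points arbitrarily close to $\hat{\mathbf{x}}$ that strictly dominate $\hat{\mathbf{x}}$, contradicting its local Pareto optimality.

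First I would treat the case where $\mathbf{d}$ lies in the hypertangent cone $\mathcal{T}^{\text{H}}_{\Omega}(\hat{\mathbf{x}})$. Setting $\beta = \tfrac{1}{2} \max_{i} f^{\text{o}}_i(\hat{\mathbf{x}}; \mathbf{d}) < 0$, the very definition of each Clarke-Jahn generalized derivative as a limit superior provides, for every $i$, a threshold below which
\[
\frac{f_i(\mathbf{y} + t \mathbf{d}) - f_i(\mathbf{y})}{t} < \beta
\]
for all admissible pairs $(\mathbf{y}, t)$ with $\mathbf{y} \in \Omega$ close enough to $\hat{\mathbf{x}}$, $t$ small enough, and $\mathbf{y} + t \mathbf{d} \in \Omega$. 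Since the hypertangent property ensures that $(\mathbf{y}, t) = (\hat{\mathbf{x}}, t)$ is admissible for all sufficiently small $t > 0$, picking $t$ below the minimum of the $m$ thresholds yields $f_i(\hat{\mathbf{x}} + t\mathbf{d}) < f_i(\hat{\mathbf{x}})$ simultaneously for each $i$. Consequently $f(\hat{\mathbf{x}} + t\mathbf{d}) < f(\hat{\mathbf{x}})$, i.e.\ $\hat{\mathbf{x}} + t\mathbf{d} \prec\prec \hat{\mathbf{x}}$, and this point lies in any prescribed neighbourhood of $\hat{\mathbf{x}}$ once $t$ is chosen small enough, contradicting local Pareto optimality.

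For a general Clarke tangent vector $\mathbf{d} \in \mathcal{T}^{\text{Cl}}_{\Omega}(\hat{\mathbf{x}})$ that is not itself hypertangent, I would invoke the limiting identity $f^{\text{o}}_i(\hat{\mathbf{x}}; \mathbf{d}) = \lim\limits_{\mathbf{d}' \to \mathbf{d},\ \mathbf{d}' \in \mathcal{T}^{\text{H}}_{\Omega}(\hat{\mathbf{x}})} f^{\text{o}}_i(\hat{\mathbf{x}}; \mathbf{d}')$ recalled just before the statement. Because there are only finitely many objective components and each of the $m$ limits is strictly negative, one can select a single hypertangent direction $\mathbf{d}'$ close enough to $\mathbf{d}$ that $f^{\text{o}}_i(\hat{\mathbf{x}}; \mathbf{d}') < 0$ simultaneously for all $i$. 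Applying the hypertangent case to $\mathbf{d}'$ then produces a point strictly dominating $\hat{\mathbf{x}}$ in every neighbourhood of $\hat{\mathbf{x}}$, again contradicting local Pareto optimality.

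The main obstacle I expect is the bookkeeping required to pass from the hypertangent case to the full Clarke tangent case: one must ensure that the hypertangent cone is nonempty at $\hat{\mathbf{x}}$ so that the limit defining $f^{\text{o}}_i(\hat{\mathbf{x}}; \mathbf{d})$ is meaningful, and that a single approximating $\mathbf{d}'$ can be chosen that preserves negativity of all $m$ generalized derivatives at once, which is the one point where the finiteness of $m$ is essential. Lipschitz continuity of $f$ near $\hat{\mathbf{x}}$ enters implicitly through the well-definedness and finiteness of each $f^{\text{o}}_i(\hat{\mathbf{x}}; \cdot)$; the single-objective skeleton is exactly the classical Clarke argument used, for example, in~\cite{AuDe2006}, and the multiobjective extension reduces to the combined choice described above.
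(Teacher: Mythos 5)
The paper states this theorem without proof: it is quoted as a known necessary optimality condition (the nonsmooth multiobjective analogue of the classical Clarke stationarity result, as established in the direct multisearch literature), so there is no in-paper argument to compare yours against. Your contradiction proof is correct and is essentially the standard one. The hypertangent case is sound: since each \(f^{\text{o}}_i(\hat{\mathbf{x}};\mathbf{d})\) is a limit superior over admissible pairs \((\mathbf{y},t)\), the bound \(f^{\text{o}}_i(\hat{\mathbf{x}};\mathbf{d}) < \beta < 0\) does yield a threshold below which every admissible difference quotient is below \(\beta\), and the hypertangent property (taking \(\mathbf{y}=\hat{\mathbf{x}}\), \(\mathbf{w}=\mathbf{d}\) in its definition) guarantees \(\hat{\mathbf{x}}+t\mathbf{d}\in\Omega\) for all small \(t>0\), so the pair \((\hat{\mathbf{x}},t)\) is indeed admissible and the \(m\) strict decreases hold simultaneously for \(t\) below the minimum of the finitely many thresholds, producing a feasible point with \(\hat{\mathbf{x}}+t\mathbf{d}\prec\prec\hat{\mathbf{x}}\) in any neighbourhood. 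The extension to general \(\mathbf{d}\in\mathcal{T}^{\text{Cl}}_{\Omega}(\hat{\mathbf{x}})\) via the limit identity \(f^{\text{o}}_i(\hat{\mathbf{x}};\mathbf{d})=\lim_{\mathbf{d}'\to\mathbf{d},\,\mathbf{d}'\in\mathcal{T}^{\text{H}}_{\Omega}(\hat{\mathbf{x}})} f^{\text{o}}_i(\hat{\mathbf{x}};\mathbf{d}')\) is also the right move, and you correctly identify the two fine points: this identity presupposes \(\mathcal{T}^{\text{H}}_{\Omega}(\hat{\mathbf{x}})\neq\emptyset\) (an assumption the paper itself makes implicitly when it invokes the same limit formula just before the theorem), and the selection of a single \(\mathbf{d}'\) keeping all \(m\) generalized derivatives negative uses the finiteness of the number of objectives. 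No gaps; your write-up could only be tightened by making explicit that strict domination by a feasible point arbitrarily close to \(\hat{\mathbf{x}}\) contradicts the definition of local Pareto optimality through the neighbourhood \(\mathcal{N}(\hat{\mathbf{x}})\), which you in fact already state.
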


The above theorem means that there does not exist any descent direction that is a descent direction for all objectives at a locally Pareto optimal solution.

\section{DMulti-MADS}
\label{sect:Presentation_DMulti_MADS}

This work considers the DMulti-MADS algorithm~\cite{BiLedSa2020} with the progressive barrier technique~\cite{G-2022-10} to handle inequality constraints. DMulti-MADS is an iterative direct search method that extends the MADS algorithm~\cite{AuDe2006, AuDe09a} to multiobjective constrained optimization. Inspired by BiMADS~\cite{AuSaZg2008a}, it can be seen as a specific instance of the DMS framework~\cite{CuMaVaVi2010}. Good numerical results on benchmark functions~\cite{BiLedSa2020} and real applications justify its use in this research.

Each iteration of DMulti-MADS consists of two main steps: the \textit{search} and the \textit{poll}. The search is an optional step that allows the use of various strategies, such as surrogates~\cite{BraCu2020}, to explore the decision space by evaluating a finite number of candidates. The poll performs a local exploration of the decision space in a region delimited by a frame size parameter \(\Delta^k > 0\) centered around a current incumbent. The poll follows stricter rules, which, when properly configured, ensures convergence~\cite{BiLedSa2020}.

All candidates generated during an iteration \(k\) must belong to a discrete set called the \textit{mesh} defined by
\[M^k = \bigcup_{\mathbf{x} \in V^k} \{\mathbf{x} + \delta^k \mathbf{D} \mathbf{z}: \mathbf{z} \in \mathbb{N}^{n_\text{D}}\} \subset \mathbb{R}^n,\]
where \(\delta^k > 0\) is the mesh size parameter, \(\mathbf{D} = \mathbf{G} \mathbf{Z} \in \mathbb{R}^{n \times n_\text{D}}\) is a positive spanning set matrix, i.e., a matrix whose columns form a positive spanning set for \(\mathbb{R}^n\) (see~\cite[Chapter 6]{AuHa2017} or~\cite[Chapter 2]{CoScVibook}) for some non-singular matrix \(\mathbf{G} \in \mathbb{R}^{n \times n}\) and integer matrix \(\mathbf{Z} \in \mathbb{Z}^{n \times n_\text{D}}\). In practice, \(\mathbf{G} = I_n\) and \(\mathbf{Z} = [-I_n \ I_n] = \mathbf{D}\), where \(I_n\) is the identity matrix of size \(n \times n\). The \textit{cache} \(V^k \subset \mathcal{X}\) is the set of trial points which have been evaluated by the algorithm by the start of iteration \(k\).

The poll set at iteration \(k\), denoted as \(P^k\), is a subset of \(M^k\). Its construction must satisfy specific requirements. It involves a current iterate incumbent \((\mathbf{x}^k, \Delta^k) \in V^k \times \mathbb{R}_+^*\). The point \(\mathbf{x}^k\) is denoted as the current frame incumbent center at iteration \(k\). The parameter \(\Delta^k\) is called the \textit{frame size parameter} at iteration \(k\) and must satisfy \(0 < \delta^k \leq \Delta^k\) for all \(k\) and \(\lim_{k \in K} \delta^k = 0\) if and only if \(\lim_{k \in K} \Delta^k = 0\) for any subset of indices \(K\). In practice, \(\delta^k = \min\{\Delta^k, \left(\Delta^k\right)^2\}\) satisfies these requirements. Formally, 
\[P^k = \{\mathbf{x}^k + \delta^k \mathbf{d}: \mathbf{d} \in \mathbb{D}^k_\Delta\} \subset \{\mathbf{x} \in M^k :\| \mathbf{x} - \mathbf{x}^k\|_{\infty} \leq \Delta^k b\},\]
where \(b  = \max \{\|\mathbf{d}'\|_{\infty}: \mathbf{d}' \in \mathbf{D} \}\), \(\mathbb{D}^k_{\Delta}\) is a positive spanning set of directions. Interested readers can refer to~\cite{AbAuDeLe09, AuDe2006} for procedures to build such positive spanning sets \(\mathbb{D}^k_\Delta\).

To handle inequality constraints, DMulti-MADS with the progressive barrier uses the constraint violation function~\cite{FlLe02a, FlLeTo02a}:
\[h(\mathbf{x}) =
\begin{cases}
\displaystyle\sum_{j \in \mathcal{J}} (\max \left\{0, c_j(\mathbf{x})\right\})^2 & \text{ if } \mathbf{x} \in \mathcal{X}, \\
+ \infty & \text{ otherwise.}
\end{cases}\]
The constraint violation function quantifies the violation of the constraints at a given point. It is non-negative and satisfies \(h(\mathbf{x}) = 0 \) if and only if \(\mathbf{x} \in \Omega\).

Using this constraint violation function, DMulti-MADS introduces an extension of the dominance relation for constrained multiobjective optimization~\cite{G-2022-10}.

\begin{definition}[Dominance for constrained optimization]
    Let two decision vectors \(\mathbf{x}^1\) and \(\mathbf{x}^2\) be in \(\mathcal{X}\). Then \(\mathbf{x}^1 \in \mathcal{X}\) is said to dominate \(\mathbf{x}^2 \in \mathcal{X}\) if
    \begin{itemize}
        \item both points are feasible and \(\mathbf{x}^1 \in \Omega\) dominates \(\mathbf{x}^2 \in \Omega\), denoted as \(\mathbf{x}^1 \prec_f \mathbf{x}^2\);
        \item both points are infeasible and \(f_i(\mathbf{x}^1) \leq f_i(\mathbf{x}^2)\) for \(i = 1, 2, \ldots, m\) and \(h(\mathbf{x}^1) \leq h(\mathbf{x}^2)\) with at least one inequality strictly satisfied, denoted as \(\mathbf{x}^1 \prec_h \mathbf{x}^2\).
    \end{itemize}
\end{definition}

Note that a pair of feasible and infeasible points are never compared.
 
Each iteration of DMulti-MADS is organized around at least one of the following set of current incumbent solutions:
\begin{itemize}
    \item the set of feasible incumbent solutions
    \[F^k = \arg \min_{\mathbf{x} \in V^k} \{ f(\mathbf{x}) : \mathbf{x} \in \Omega\},\]
    \item the set of infeasible incumbent solutions
    \[I^k = \arg \min_{\mathbf{x} \in U^k} \{f(\mathbf{x}) : 0 < h(\mathbf{x}) \leq h^k_{\max}\},\]
    with \(U^k\) the set of infeasible nondominated points
    \[U^k = \{\mathbf{x} \in V^k \setminus \Omega: \text{ there is no other } \mathbf{y} \in V^k \setminus \Omega \text { such that } \mathbf{y} \prec_h \mathbf{x}\}\]
    and \(h^k_{\max} > 0\) the barrier threshold at iteration \(k\).
\end{itemize}

Using one of these two sets of current incumbents, DMulti-MADS constructs a so-called \textit{iterate list}~\cite{CuMaVaVi2010} of non-dominated points
\[L^k = \{(\mathbf{x}^l, \Delta^l) : \mathbf{x}^l \in X^k, l = 1, 2, \ldots, l_k\},\]
where \(l_k = |X^k|\) and \(X^k \subseteq \begin{cases} I^k & \text {if } F^k = \emptyset \\ F^k & \text{otherwise}\end{cases}\), with \(X_k \neq \emptyset\). Each element of the list possesses its own frame and mesh size parameters.

At iteration \(k\), DMulti-MADS chooses the current incumbent iterate \((\mathbf{x}^k, \Delta^k)\) among the elements of~\(L^k\). This last one must satisfy at least the following condition:
\[(\mathbf{x}^k, \Delta^k) \in \{(\mathbf{x}, \Delta) \in L^k : \tau^{w^+} \Delta^k_{\max} \leq \Delta\},\]
where \(\tau \in \mathbb{Q} \cap (0, 1)\) is the \textit{mesh size adjustment parameter}~\cite{AuHa2017}, \(w^+ \in \mathbb{N}\) a fixed integer, and \(\Delta^k_{\max}\) the maximum frame size parameter at iteration \(k\) defined as \(\Delta^k_{\max} = \displaystyle\max_{l = 1, 2, \ldots, |L^k|} \Delta^l\).

When \(F^k \neq \emptyset\) and \(I^k \neq \emptyset\), the poll may be executed around two frame centers belonging to these two sets with the frame size parameter \(\Delta^k\) of the current iteration \(k\).

All new candidates generated during the poll and the search are affected a frame size parameter larger than the frame size parameter of the current incumbent iterate. The frame size parameter of the current incumbent is reduced at the end of the iteration if no other candidate is found which dominates at least one of the current frame centers. In all cases, the iterate list is filtered at the end of the iteration to remove new dominated points for the next iteration and the barrier threshold is reduced or kept constant, i.e., \(h^{k+1}_{\max} \leq h^k_{\max}\).

Algorithm~\ref{alg:summary_DMulti-MADS_algorithm} provides a high-level description of DMulti-MADS for constrained optimization. Interested readers are referred to~\cite{BiLedSa2020, G-2022-10} for detailed information.

\begin{figure}[!th]
  \begin{algorithm}[H]\small
    \caption{A high-level description of the DMulti-MADS algorithm for constrained optimization, inspired by~\cite{BiLedSa2020, G-2022-10}.}
   \begin{algorithmic}[1]
    \STATE \textbf{Initialization}: Given a finite set of points \(V^0 \subset \mathcal{X}\), choose \(\Delta^0 > 0\), \(\mathbf{D} = \mathbf{G} \mathbf{Z}\) a positive spanning set matrix, \(\tau \in (0, 1) \cap \mathbb{Q}\) the frame size adjustment parameter, \(w^+ \in \mathbb{N}\) a fixed integer parameter, and \(h^0_{\max} = + \infty\) the initial barrier threshold. Initialize the iterate list \(L^0 = \{(\mathbf{x}^l, \Delta^0), l = 1, 2, \ldots, |L^k|\}\) for some \(\mathbf{x}^l \in V^0 \cap F^0\) or (exclusively) \(\mathbf{x}^l \in V^0 \cap I^0\).
    \FOR{\(k = 0, 1, 2, \ldots\)}
    \STATE \textbf{Selection of the current frame centers}. Select the current incumbent iterate \((\mathbf{x}^k, \Delta^k)\) of the iterate list \(L^k\), satisfying:
    \[(\mathbf{x}^k, \Delta^k) \in \{(\mathbf{x}, \Delta) \in L^k : \tau^{w^+} \Delta^k_{\max} \leq \Delta\},\]
    and its associated second frame center it it exists.
    
    Set \(\delta^k = \min \left\{\Delta^k, \left(\Delta^k\right)^2\right\}\). Initialize \(L^\text{add} := \emptyset\).
    \STATE \textbf{Search} (optional): Evaluate \(f\) and \(h\) at a finite set of points \(S^k \subset \mathcal{X}\) on the mesh \(M^k\). Set \(L^\text{add} := \{(\mathbf{x}, \Delta^k): \mathbf{x} \in S^k\}\).

    If a success criterion is satisfied, the search may terminate. In this case, skip the poll and go to the parameter update step.
    \STATE \textbf{Poll}: Select a positive spanning set \(\mathbb{D}^k_{\Delta}\). Evaluate \(f\) and \(h\) on the poll set \(P^k \subset M^k\) using at least the current frame center. Set \(L^\text{add} := L^\text{add} \cup \left\{(\mathbf{x}, \Delta^k) : \mathbf{x} \in P^k \right\}\).
    If a success criterion is satisfied, the poll may terminate opportunistically.
    \STATE \textbf{Parameter update}: Define \(V^{k+1}\) as the union of \(V^k\) and all new candidates evaluated in \(\mathcal{X}\) during the search and the poll. Set \(L^{k+1} := L^k\). Update \(h^{k+1}_{\max} \leq h^{k}_{\max}\). Update the iterate list \(L^{k+1}\) by adding new non-dominated points from \(L^\text{add}\) with their updated associated frame center \(\Delta \in \{\Delta^k, \tau^{-1} \Delta^k\}\). Remove new dominated points from \(L^{k+1}\).

    If the iteration is unsuccessful, replace the iterate element \((\mathbf{x}^k, \Delta^k)\) by \((\mathbf{x}^k, \Delta^{k+1})\) with \(\Delta^{k+1} := \tau \Delta^k\).
    \ENDFOR 
   \end{algorithmic}
   \label{alg:summary_DMulti-MADS_algorithm}
  \end{algorithm}
\end{figure}

If all trial points generated during the optimization belong to a bounded set and the set of refining directions~\cite{AuDe2006} is ``sufficiently rich'', the main convergence result of DMulti-MADS~\cite{BiLedSa2020} states that it will generate at least one subsequence converging to a point \(\mathbf{x}^{*}\) such that the Clarke generalized derivative~\cite{Clar83a} \(f_{i_0}^\text{o}(\mathbf{x}^{*}; \mathbf{d})\) will be nonnegative for at least one objective index \(i_0 \in \{1, 2, \ldots, m\}\) for every hypertangent direction \(\mathbf{d}\)~\cite{Jahn2007} to the domain \(\Omega\) at \(\mathbf{x}^*\), assuming the existence of feasible iterates.

When there is no feasible iterate, one can show similarly~\cite{G-2022-10} that DMulti-MADS will generate an accumulation infeasible point \(\mathbf{x}^{*}\) such that its Clarke generalized derivative \(h^\text{o}(\mathbf{x}^{*}; \mathbf{d})\) will be nonnegative for every hypertangent direction \(\mathbf{d}\) to the domain \(\mathcal{X}\) at \(\mathbf{x}^*\).

All these results hold independently of the search used to build \(S^k\), as long as this set is finite and search candidates belong to the mesh \(M^k\). The new search strategies described in the rest of this work meet these two requirements.

\section{Single-objective formulations}
\label{sect:Single-objective reformulations}

The search strategies proposed in this work are based on the resolution of single-objective subproblems derived from the original multiobjective optimization problem (\ref{ref:MOP}), in the continuation of~\cite{AuSaZg2008a, AuSaZg2010a, BraCu2020}.

\begin{definition}[adapted from~\cite{AuSaZg2008a, AuSaZg2010a}] \label{def:single_objective_formulation} Consider the single-objective optimization problem:
\[R_r: \min_{\mathbf{x} \in \Omega} \psi_{Y}(\mathbf{x}) \text{ with } \psi_{Y}(\mathbf{x}) = \phi_{Y} \left(f(\mathbf{x})\right),\]
where \(\phi_{Y} : \mathbb{R}^m \rightarrow \mathbb{R}\) is parameterized with respect to some finite reference set \(Y \subset \mathbb{R}^m \neq \emptyset\) satisfying
\[\text{for all } (\mathbf{r}^1, \mathbf{r}^2) \in Y, \mathbf{r}^1 \nleq \mathbf{r}^2 \text{ and } \mathbf{r}^2 \nleq \mathbf{r}^1.\]
Then \(R_r\) is called a \textit{single-objective formulation at \(Y\) of (\ref{ref:MOP}}) if the following conditions hold:
\begin{itemize}
    \item if \(f\) is Lipschitz continuous near some \(\tilde{\mathbf{x}} \in \Omega\), then \(\psi_Y\) is also Lipschitz continuous near \(\tilde{\mathbf{x}} \in \Omega\).
    \item if \(f\) is Lipschitz continuous near some \(\tilde{\mathbf{x}} \in \Omega\) with \(f_i(\tilde{\mathbf{x}}) < r_i\) component-wise for at least one element \(\mathbf{r} \in Y\) and if \(\mathbf{d} \in \mathcal{T}_{\Omega}^\text{Cl}(\tilde{\mathbf{x}})\) such that \(f_i^\text{o}(\tilde{\mathbf{x}}; \mathbf{d}) < 0\) for \(i = 1, 2, \ldots, m\), then \(\psi_Y^\text{o}(\tilde{\mathbf{x}}; \mathbf{d}) < 0\).
\end{itemize}
\end{definition}

Such formulation is useful as it preserves local Lipschitz continuity (first condition of Definition~\ref{def:single_objective_formulation}) and ensures that a descent direction for the objective function is also a descent direction for the single-objective formulation function (second condition of Definition~\ref{def:single_objective_formulation}).

This definition differs from the one proposed in~\cite{AuSaZg2008a, AuSaZg2010a}, as it is no more restricted to the use of only one reference vector. Note that the \textit{single-objective distance formulation}, defined in~\cite{AuSaZg2010a} by
\[
    \begin{array}{lll}
    \bar{R}_\mathbf{r}: \displaystyle\min_{\mathbf{x} \in \Omega} \bar{\psi}_{\mathbf{r}}(\mathbf{x}) & = & \bar{\phi}_{\mathbf{r}}(f(\mathbf{x})) \\
    & = & \begin{cases}
          -\text{dist}^{2}(\partial \mathcal{D}(\mathbf{r}), f(\mathbf{x})) & \text{if } f(\mathbf{x}) \in \mathcal{D}(\mathbf{r}),\\
          ~~ \text{dist}^{2}(\partial \mathcal{D}(\mathbf{r}), f(\mathbf{x})) & \text{otherwise},
        \end{cases}
    \end{array}
\]
where \(\text{dist}^{2}(\partial \mathcal{D}(\mathbf{r}), f(\mathbf{x}))\) is the distance from \(f(\mathbf{x})\) to the boundary \(\partial \mathcal{D}(\mathbf{r})\) of the dominance zone \(\mathcal{D}(\mathbf{r}) = \left\{\mathbf{y} \in \mathbb{R}^m: \mathbf{y} \leq \mathbf{r}\right\}\) relative to some reference objective vector \(\mathbf{r} \in \mathbb{R}^m\), satisfies the two conditions of Definition~\ref{def:single_objective_formulation} by considering the singleton \(Y = \{\mathbf{r}\}\).

By allowing the single-objective formulation to be parameterized by a reference set, an algorithm can potentially explore a larger zone of the objective space where only the zone dominated by a reference objective vector would have been prioritized. This work also uses the \textit{single-objective dominance move formulation}, inspired by~\cite{Li2017} and defined as
\[
    \begin{array}{lll}
    R^\text{Do}_{Y}: \displaystyle\min_{\mathbf{x} \in \Omega} \psi^\text{Do}_{Y}(\mathbf{x}) & = & \phi^\text{Do}_{Y}\left(f(\mathbf{x})\right) \\
    & = & 
    \begin{cases}
          - \displaystyle\min_{\mathbf{r} \in Y} \displaystyle\sum_{i = 1}^{m} \max\left\{0, r_{i} - f_{i}(\mathbf{x})\right\} & \text{if } f(\mathbf{x}) \text{ is not dominated by any element of } Y,  \\
          ~~~\displaystyle\min_{\mathbf{r} \in Y} \displaystyle\sum_{i = 1}^{m} \max\left\{0, f_{i}(\mathbf{x}) - r_i\right\} & \text{otherwise.}
    \end{cases}
    \end{array}
\]

Figure~\ref{fig:contour_dom} shows the level sets of the \(R^\text{Do}_{Y}\) formulation. Note that the ``zone of interest'' is larger than the union of the zones relative to each element of the reference set \(Y\). However, this formulation does not preserve the differentiability of the original problem (\ref{ref:MOP}). Also, the evaluation of \(\phi_{Y}^\text{Do}\) at a given solution has a complexity cost of \(\mathcal{O}\left(m |Y|\right)\), which could be expensive if \(Y\) is large.

\begin{figure}[!ht]
\centering
\includegraphics[width=0.7\linewidth]{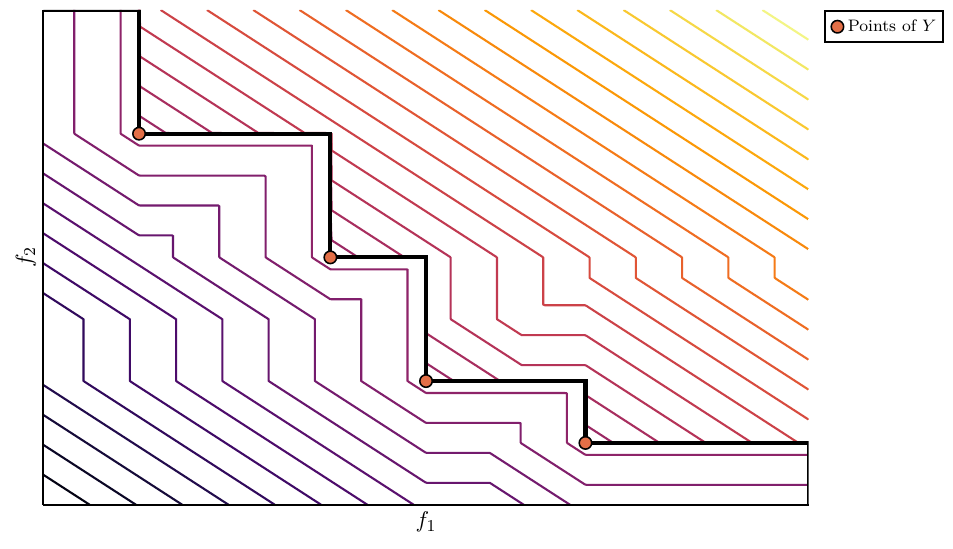}
\caption{Levels sets in the objective space of the \(R^\text{Do}_Y\) formulation for a biobjective minimization problem.}
\label{fig:contour_dom}
\end{figure}

The next theorem shows that \(R^\text{Do}_Y\) is a single-objective formulation of (\ref{ref:MOP}). Before proving the theorem, the following lemma is provided.

\begin{lemma}\label{lemma:max_min_lipschitz}
    Let \((g_i)_{i \in \mathcal{I}}\) be a finite collection of functions from \(\Omega\) to \(\mathbb{R}\). If \(g_i\) for \(i \in \mathcal{I}\) is Lipschitz continuous near some \(\bar{\mathbf{x}} \in \Omega\), then \(\displaystyle\max_{i \in \mathcal{I}} g_i\) and \(\displaystyle\min_{i \in \mathcal{I}} g_i\) are Lipschitz continuous near \(\bar{\mathbf{x}} \in \Omega\).
\end{lemma}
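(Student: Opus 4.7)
The plan is to reduce the statement to the elementary inequality
\[
\left|\max_{i \in \mathcal{I}} g_i(\mathbf{x}) - \max_{i \in \mathcal{I}} g_i(\mathbf{y})\right| \leq \max_{i \in \mathcal{I}} |g_i(\mathbf{x}) - g_i(\mathbf{y})|,
\]
and the analogous one for the minimum, then combine with the individual Lipschitz bounds. Since $\mathcal{I}$ is finite, I do not need to worry about a supremum being infinite.

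First I would set up a common neighborhood. By hypothesis, for each $i \in \mathcal{I}$ there exist a neighborhood $\mathcal{N}_i(\bar{\mathbf{x}})$ and a constant $L_i > 0$ such that $|g_i(\mathbf{x}) - g_i(\mathbf{y})| \leq L_i \|\mathbf{x} - \mathbf{y}\|$ for all $\mathbf{x}, \mathbf{y} \in \mathcal{N}_i(\bar{\mathbf{x}}) \cap \Omega$. Because $\mathcal{I}$ is finite, $\mathcal{N}(\bar{\mathbf{x}}) := \bigcap_{i \in \mathcal{I}} \mathcal{N}_i(\bar{\mathbf{x}})$ is still a neighborhood of $\bar{\mathbf{x}}$, and $L := \max_{i \in \mathcal{I}} L_i$ is a finite positive constant on which every $g_i$ is $L$-Lipschitz on $\mathcal{N}(\bar{\mathbf{x}}) \cap \Omega$.

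Next, I would prove the inequality for the max. For any $i \in \mathcal{I}$ and $\mathbf{x}, \mathbf{y} \in \mathcal{N}(\bar{\mathbf{x}}) \cap \Omega$,
\[
g_i(\mathbf{x}) \leq g_i(\mathbf{y}) + L \|\mathbf{x} - \mathbf{y}\| \leq \max_{j \in \mathcal{I}} g_j(\mathbf{y}) + L \|\mathbf{x} - \mathbf{y}\|.
\]
Taking the max over $i$ on the left yields $\max_{i} g_i(\mathbf{x}) - \max_{j} g_j(\mathbf{y}) \leq L \|\mathbf{x} - \mathbf{y}\|$, and the roles of $\mathbf{x}$ and $\mathbf{y}$ are symmetric, so $|\max_{i} g_i(\mathbf{x}) - \max_{i} g_i(\mathbf{y})| \leq L \|\mathbf{x} - \mathbf{y}\|$. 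For the minimum, I would either repeat the argument with the inequalities reversed, or, more efficiently, observe that $\min_{i \in \mathcal{I}} g_i = -\max_{i \in \mathcal{I}} (-g_i)$ and that negation preserves the Lipschitz constant, so the min case follows immediately from the max case.

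There is no genuine obstacle here: the result is a standard exercise. The only point requiring a little care is ensuring that all constructions (neighborhood, Lipschitz constant) are uniform over $i \in \mathcal{I}$, which is exactly where the finiteness of $\mathcal{I}$ is used.
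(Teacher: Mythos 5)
Your proof is correct and follows essentially the same route as the paper: both reduce the maximum case to the inequality \(\left|\max_{i} g_i(\mathbf{x}) - \max_{i} g_i(\mathbf{y})\right| \leq \max_{i} \left|g_i(\mathbf{x}) - g_i(\mathbf{y})\right|\) (you via the one-sided bound plus symmetry, the paper via a case analysis on the two argmax indices) and both dispatch the minimum through \(\min_i g_i = -\max_i(-g_i)\). Your explicit intersection of the neighborhoods and use of \(L = \max_i L_i\) makes precise a uniformity point the paper leaves implicit, but this is a refinement of the same argument, not a different one.
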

\begin{proof}
Let \((g_i)_{i \in I}\) be Lipschitz continuous near \(\bar{\mathbf{x}} \in \Omega\) with \(\lambda_i \geq 0\) the Lipschitz constant associated to \(g_i\) for \(i \in \mathcal{I}\).
Let \(\mathbf{x} \in \Omega\) be another point in the neighbourhood of \(\bar{\mathbf{x}} \in \Omega\), \(i_{0} \in \arg \displaystyle\max_{i \in \mathcal{I}} g_i(\bar{\mathbf{x}})\) and \(i_{1} \in \arg \displaystyle\max_{i \in \mathcal{I}} g_i(\mathbf{x})\). Then for all \(i \in \mathcal{I}\):
\[\left|\max_{i \in \mathcal{I}} g_i(\bar{\mathbf{x}}) - \max_{i \in \mathcal{I}} g_i(\mathbf{x})\right| = \begin{cases} g_{i_0}(\bar{\mathbf{x}}) - g_{i_1}(\mathbf{x}) & \leq g_{i_0}(\bar{\mathbf{x}}) - g_{i_0}(\mathbf{x}) \text{ or } \\
g_{i_1}(\mathbf{x}) - g_{i_0}(\bar{\mathbf{x}}) & \leq g_{i_1}(\mathbf{x}) - g_{i_1}(\bar{\mathbf{x}}).
\end{cases}
\]
Hence
\[\left|\max_{i \in \mathcal{I}} g_i(\bar{\mathbf{x}}) - \max_{i \in \mathcal{I}} g_i(\mathbf{x})\right| \leq \max_{i \in \mathcal{I}} \left|g_i(\bar{\mathbf{x}}) - g_i(\mathbf{x})\right| \leq \max_{i \in \mathcal{I}} \{\lambda_i\} \|\bar{\mathbf{x}} - \mathbf{x}\|.\]
Consequently, \(\displaystyle\max_{i \in \mathcal{I}} g_i\) is Lipschitz continuous near \(\bar{\mathbf{x}}\).

To prove the second part of the lemma, observe that
\[\left|\min_{i \in \mathcal{I}} g_i(\bar{\mathbf{x}}) - \min_{i \in \mathcal{I}} g_i(\mathbf{x})\right| = \left| \max_{i \in \mathcal{I}} - g_i(\bar{x}) - \max_{i \in \mathcal{I}} -g_i(\mathbf{x})\right| \leq \max_{i \in \mathcal{I}} \left|g_i(\bar{\mathbf{x}}) - g_i(\mathbf{x})\right| \leq \max_{i \in \mathcal{I}} \{\lambda_i\} \|\bar{\mathbf{x}} - \mathbf{x}\|,\]
which concludes the proof.
\end{proof}

Next, \(R_Y^\text{Do}\)'s single-objective formulation is established.

\begin{theorem} \(R^\text{Do}_Y\) is a single-objective formulation in the sense of Definition~\ref{def:single_objective_formulation}.
\end{theorem}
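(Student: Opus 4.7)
The plan is to check both conditions of Definition~\ref{def:single_objective_formulation} in turn, using Lemma~\ref{lemma:max_min_lipschitz} for the first and a direct analysis of the Clarke-Jahn generalized derivative for the second.

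For the Lipschitz condition, suppose $f$ is Lipschitz near $\tilde{\mathbf{x}} \in \Omega$. For each $\mathbf{r} \in Y$ and each $i$, the map $\mathbf{x} \mapsto \max\{0, r_i - f_i(\mathbf{x})\}$ is Lipschitz near $\tilde{\mathbf{x}}$ by direct application of Lemma~\ref{lemma:max_min_lipschitz} to the pair $\{0,\; r_i - f_i(\cdot)\}$. Summing finitely many such Lipschitz maps preserves the Lipschitz property, so $v_\mathbf{r}(\mathbf{x}) := \sum_{i=1}^m \max\{0, r_i - f_i(\mathbf{x})\}$ and its counterpart $u_\mathbf{r}(\mathbf{x}) := \sum_{i=1}^m \max\{0, f_i(\mathbf{x}) - r_i\}$ are Lipschitz near $\tilde{\mathbf{x}}$. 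A second application of Lemma~\ref{lemma:max_min_lipschitz} over the finite set $Y$ then yields Lipschitz continuity of $\min_{\mathbf{r} \in Y} v_\mathbf{r}$ and of $\min_{\mathbf{r} \in Y} u_\mathbf{r}$ near $\tilde{\mathbf{x}}$. To handle the piecewise definition of $\psi^\text{Do}_Y$, I would invoke continuity of $f$ to restrict to a small enough ball around $\tilde{\mathbf{x}}$ on which $f(\mathbf{x})$ remains in a single branch (``not dominated by any $\mathbf{r} \in Y$'' or ``dominated by some $\mathbf{r} \in Y$''), so that the piecewise formula reduces locally to one of the two Lipschitz expressions just derived.

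For the descent condition, fix $\mathbf{r} \in Y$ with $f_i(\tilde{\mathbf{x}}) < r_i$ for all $i$. The first key observation is that the antichain structure of $Y$ forbids any other $\mathbf{r}' \in Y$ from satisfying $\mathbf{r}' \leq f(\tilde{\mathbf{x}})$: such an $\mathbf{r}'$ would in particular satisfy $r'_i < r_i$ for every $i$, contradicting the antichain hypothesis on $Y$. Hence $f(\tilde{\mathbf{x}})$ is not dominated by any element of $Y$, and by continuity of $f$ the same holds on a full neighborhood of $\tilde{\mathbf{x}}$, on which $\psi^\text{Do}_Y = -\min_{\mathbf{r}' \in Y} v_{\mathbf{r}'} = \max_{\mathbf{r}' \in Y} (-v_{\mathbf{r}'})$. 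Using the standard Clarke derivative upper bound for a pointwise maximum, it suffices to show $(-v_{\mathbf{r}'})^\text{o}(\tilde{\mathbf{x}}; \mathbf{d}) < 0$ for each active index $\mathbf{r}' \in \arg\min_{\mathbf{r}'' \in Y} v_{\mathbf{r}''}(\tilde{\mathbf{x}})$. A variant of the same antichain argument shows that every such active $\mathbf{r}'$ satisfies $v_{\mathbf{r}'}(\tilde{\mathbf{x}}) > 0$; in other words, at least one coordinate $i$ has $r'_i > f_i(\tilde{\mathbf{x}})$. I would then split the coordinates of each active $\mathbf{r}'$ into the three groups $\{r'_i > f_i(\tilde{\mathbf{x}})\}$, $\{r'_i < f_i(\tilde{\mathbf{x}})\}$, and $\{r'_i = f_i(\tilde{\mathbf{x}})\}$, compute the Clarke derivative of the corresponding term $-\max\{0, r'_i - f_i(\cdot)\}$ on each group (equal to $f_i^\text{o}(\tilde{\mathbf{x}}; \mathbf{d}) < 0$, equal to $0$, and bounded above by $0$ respectively), and use subadditivity of the Clarke derivative to sum these into a strictly negative upper bound for $(-v_{\mathbf{r}'})^\text{o}(\tilde{\mathbf{x}}; \mathbf{d})$.

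The main obstacle is the equality case $r'_i = f_i(\tilde{\mathbf{x}})$, where $\max\{0, r'_i - f_i\}$ is nonsmooth and its Clarke derivative cannot be simplified through the classical derivative. Exploiting that $f_i^\text{o}(\tilde{\mathbf{x}}; \mathbf{d}) < 0$ forces $f_i$ to strictly decrease along $\mathbf{d}$ on all relevant sequences, one can estimate the contribution of such terms directly from the $\limsup$ definition and conclude that it is non-positive; the strictly negative contribution coming from the coordinates with $r'_i > f_i(\tilde{\mathbf{x}})$ (which exist by the antichain argument above) then dominates the sum and delivers the desired strict inequality $\psi^\text{Do}_Y{}^\text{o}(\tilde{\mathbf{x}}; \mathbf{d}) < 0$.
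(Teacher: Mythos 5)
Your handling of the first (Lipschitz) condition contains a genuine gap, and it sits exactly at the step you relegate to a localization argument: the claim that some ball around \(\tilde{\mathbf{x}}\) keeps \(f(\mathbf{x})\) in a single branch of \(\psi^\text{Do}_Y\) fails whenever \(f(\tilde{\mathbf{x}})\) lies on a facet of the boundary of the dominated zone, i.e., \(\mathbf{r} \leq f(\tilde{\mathbf{x}})\) with equality in some coordinates and strict inequality in others. Concretely, take \(n = 1\), \(Y = \{(1,1)\}\), \(f(x) = (2, x)\) and \(\tilde{x} = 1\): the point \(f(\tilde{x}) = (2,1)\) is dominated, so \(\psi^\text{Do}_Y(\tilde{x}) = \max\{0, 2-1\} + \max\{0, 1-1\} = 1\), while every neighbourhood contains points \(x = 1 - \varepsilon\) with \(f(x) = (2, 1-\varepsilon)\) non-dominated and \(\psi^\text{Do}_Y(x) = -\varepsilon\). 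So \(\psi^\text{Do}_Y\) jumps by \(1\) across \(\tilde{x}\): it is not even continuous there, and no shrinking of the ball can repair your argument — the step is not merely incomplete, it is unfixable at such points. You should know that this is also the weak point of the paper's own proof, which bounds \(\left|\psi^\text{Do}_Y(\bar{\mathbf{x}}) - \psi^\text{Do}_Y(\mathbf{x})\right|\) by \(\left|\max\{\psi^{\text{Do},1}_Y(\bar{\mathbf{x}}), \psi^{\text{Do},2}_Y(\bar{\mathbf{x}})\} - \max\{\psi^{\text{Do},1}_Y(\mathbf{x}), \psi^{\text{Do},2}_Y(\mathbf{x})\}\right|\); on the example above the left-hand side is \(1 + \varepsilon\) while the right-hand side is \(0\), so that inequality is false in general. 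A defensible version of the first condition must exclude the dominance-boundary facets (your localization is then exactly right, since away from them one of the two expressions \(-\min_{\mathbf{r}} v_{\mathbf{r}}\), \(\min_{\mathbf{r}} u_{\mathbf{r}}\) represents \(\psi^\text{Do}_Y\) on a full ball) or modify the branch condition so the two expressions agree across the boundary.

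Your proof of the second (descent) condition, by contrast, is correct and noticeably more careful than the paper's. The paper fixes a single minimizing \(\tilde{\mathbf{r}} \in Y\), writes \(\psi^\text{Do}_Y = \sum_{i=1}^m \left(f_i - \tilde{r}_i\right)\) near \(\tilde{\mathbf{x}}\), and applies Clarke's sum rule — silently assuming all coordinates of \(\tilde{\mathbf{r}}\) are active, ignoring the kinks of \(\max\{0, \cdot\}\) and the possible switching of the argmin. Your route via \(\psi^\text{Do}_Y = \max_{\mathbf{r}'}(-v_{\mathbf{r}'})\) locally, the active-index bound for the Clarke derivative of a finite max, the antichain argument guaranteeing each \(\mathbf{r}' \in Y\) has some coordinate with \(r'_i > f_i(\tilde{\mathbf{x}})\) (hence a strictly negative contribution), and the direct \(\limsup\) estimate for tie coordinates (in effect \(\min\{0,a\} - \min\{0,b\} \leq \max\{0, a-b\}\), giving a contribution bounded by \(\max\{0, f_i^\text{o}(\tilde{\mathbf{x}};\mathbf{d})\} = 0\)) addresses all three issues rigorously. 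One small wrinkle: to secure a full neighbourhood inside the non-dominated branch you must also exclude \(\mathbf{r}' = f(\tilde{\mathbf{x}})\), which \(\mathbf{r}' \nleq f(\tilde{\mathbf{x}})\) alone does not rule out under the paper's definition of \(\leq\); the same comparison with \(\mathbf{r}\) handles it, since \(\mathbf{r}' = f(\tilde{\mathbf{x}}) < \mathbf{r}\) would yield \(\mathbf{r}' \leq \mathbf{r}\), contradicting the incomparability of \(Y\).
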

\begin{proof}
    Let \(f\) be Lipschitz continuous near \(\bar{\mathbf{x}} \in \Omega\).
    
    Then for all \(\mathbf{r} \in Y\), \(\mathbf{x} \mapsto r_i - f_i(\mathbf{x})\) is Lipschitz continuous near \(\bar{\mathbf{x}} \in \Omega\) for \(i = 1, 2, \ldots, m\). Consequently, using Lemma~\ref{lemma:max_min_lipschitz}, \(\mathbf{x} \mapsto \max \left\{0, r_i - f_i(\mathbf{x})\right\}\) is Lipschitz continuous near \(\bar{\mathbf{x}} \in \Omega\), for all \(\mathbf{r} \in Y\) and \(i = 1, 2, \ldots, m\).
    
    As a finite sum of Lipschitz continuous functions near \(\bar{\mathbf{x}} \in \Omega\), one can deduce that \(\mathbf{x} \mapsto \displaystyle\sum_{i = 1}^m \max \left\{0, r_i - f_i(\mathbf{x})\right\}\) is Lipschitz continuous near \(\bar{\mathbf{x}} \in \Omega\) for all \(\mathbf{r} \in Y\). 
    
    Reusing Lemma~\ref{lemma:max_min_lipschitz} yields that \(\psi^{\text{Do}, 1}_Y : \mathbf{x} \mapsto \displaystyle\min_{\mathbf{r} \in Y} \displaystyle\sum_{i = 1}^m \max \left\{0, r_i - f_i(\mathbf{x})\right\}\) is Lipschitz continuous near \(\bar{\mathbf{x}} \in \Omega\). Using the same reasoning, \(\psi^{\text{Do}, 2}_Y : \mathbf{x} \mapsto \displaystyle\min_{\mathbf{r} \in Y} \displaystyle\sum_{i = 1}^m \max \left\{0, f_i(\mathbf{x}) - r_i\right\}\) is Lipschitz continuous near \(\bar{\mathbf{x}} \in \Omega\).

    To show that \(\psi^\text{Do}_{Y}\) is Lipschitz continuous near \(\bar{\mathbf{x}} \in \Omega\), let \(\mathbf{x} \in \Omega\) be in the neighbourhood of \(\bar{\mathbf{x}} \in \Omega\). Then:
    \[\left|\psi^\text{Do}_{Y}(\bar{\mathbf{x}}) - \psi^\text{Do}_{Y}(\mathbf{x})\right| \leq \left|\max \left\{\psi^{\text{Do}, 1}_Y(\bar{\mathbf{x}}), \psi^{\text{Do}, 2}_Y(\bar{\mathbf{x}})\right\} - \max \left\{\psi^{\text{Do}, 1}_Y(\mathbf{x}), \psi^{\text{Do}, 2}_Y(\mathbf{x})\right\}\right|.\]
    Using Lemma~\ref{lemma:max_min_lipschitz}, \(\mathbf{x} \mapsto \max \left\{\psi^{\text{Do}, 1}_Y(\mathbf{x}), \psi^{\text{Do}, 2}_Y(\mathbf{x})\right\}\) is Lipschitz continuous near \(\bar{\mathbf{x}} \in \Omega\) and hence \(\psi^\text{Do}_Y\) satisfies the first condition of Definition~\ref{def:single_objective_formulation}.

    To prove the second condition of Definition~\ref{def:single_objective_formulation}, let \(f\) be Lipschitz continuous near \(\bar{\mathbf{x}} \in \Omega\) such that there exists at least one \(\mathbf{r} \in Y\) with \(f(\mathbf{x}) < \mathbf{r}\). Then \(\psi^\text{Do}_Y(\bar{\mathbf{x}}) = \sum_{i = 1}^m {f_i(\bar{\mathbf{x}}) - \tilde{r}_i}\) for one particular \(\tilde{\mathbf{r}} \in Y\). Let \(\mathbf{d} \in \mathcal{T}^\text{Cl}_{\Omega}(\bar{\mathbf{x}})\) satisfying \(f^\text{o}_i(\bar{\mathbf{x}}; \mathbf{d}) < 0\) for \(i = 1, 2, \ldots, m\). Then, by Proposition~\cite[2.3.3]{Clar83a},
    \[\psi^\text{Do}_Y(\bar{\mathbf{x}}; \mathbf{d}) \leq \sum_{i = 1}^m f^\text{o}_i(\bar{\mathbf{x}}; \mathbf{d}) < 0,\]
    which concludes the proof.
\end{proof}

All elements of the reference set \(Y \neq \emptyset\) can be selected anywhere in the objective space \(\mathbb{R}^m\), as long as their number is finite and that they are incomparable to each other. When these conditions are satisfied, one can show that the single-objective dominance move formulation \(R^\text{Do}_Y\) ``partially captures'' the Pareto dominance relation. This is discussed in the following lemma.

\begin{lemma}~\label{lem:formulation_dominance_prop} Let \((\mathbf{x}^1, \mathbf{x}^2) \in \Omega^2\) and \(Y \subset \mathbb{R}^m\) satisfying Definition~\ref{def:single_objective_formulation}.
\begin{enumerate}[label={(\roman*)}]
    \item If \(\mathbf{x}^1 \prec \prec \mathbf{x}^2\), then \(\psi^\text{Do}_Y(\mathbf{x}^1) < \psi^\text{Do}_Y(\mathbf{x}^2)\).    
    \item If \(\mathbf{x}^1 \prec \mathbf{x}^2\), then \(\psi^\text{Do}_Y(\mathbf{x}^1) \leq \psi^\text{Do}_Y(\mathbf{x}^2)\).    
\end{enumerate}
\end{lemma}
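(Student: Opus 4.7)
The plan is a case analysis on which branch of $\phi^\text{Do}_Y$ applies at $\mathbf{x}^1$ and at $\mathbf{x}^2$. Denote by $A(\mathbf{x}) = \displaystyle\min_{\mathbf{r} \in Y} \sum_{i=1}^m \max\{0, r_i - f_i(\mathbf{x})\}$ and $B(\mathbf{x}) = \displaystyle\min_{\mathbf{r} \in Y} \sum_{i=1}^m \max\{0, f_i(\mathbf{x}) - r_i\}$, so that $\psi^\text{Do}_Y$ equals $-A \leq 0$ on the non-dominated branch and $B \geq 0$ on the dominated branch. Before handling specific cases I would rule out the configuration in which $f(\mathbf{x}^1)$ is dominated by some $\mathbf{r} \in Y$ while $f(\mathbf{x}^2)$ is not, by transitivity of the componentwise order: $\mathbf{r} \leq f(\mathbf{x}^1) \leq f(\mathbf{x}^2)$ componentwise, with the strict coordinate of $\mathbf{r} \leq f(\mathbf{x}^1)$ carrying over, so $\mathbf{r}$ would also dominate $f(\mathbf{x}^2)$.

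For the weak inequality (ii), the three remaining configurations follow by monotonicity of $y \mapsto \max\{0, r - y\}$ and $y \mapsto \max\{0, y - r\}$. In the non-dominated/non-dominated case, the coordinatewise inequality $\max\{0, r_i - f_i(\mathbf{x}^1)\} \geq \max\{0, r_i - f_i(\mathbf{x}^2)\}$ valid for every $\mathbf{r} \in Y$ gives $A(\mathbf{x}^1) \geq A(\mathbf{x}^2)$ after summation and minimization; the dominated/dominated case is symmetric; the mixed non-dominated/dominated case is immediate from $-A(\mathbf{x}^1) \leq 0 \leq B(\mathbf{x}^2)$.

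For (i) I would upgrade each of these to a strict inequality using the pairwise incomparability of $Y$. In the non-dominated/non-dominated case, assuming $A(\mathbf{x}^1) = A(\mathbf{x}^2)$ and choosing an argmin $\mathbf{r}^\star$ for $A(\mathbf{x}^1)$ forces each summand $\max\{0, r^\star_i - f_i(\cdot)\}$ to agree at $\mathbf{x}^1$ and $\mathbf{x}^2$, which, since the unclipped values satisfy $r^\star_i - f_i(\mathbf{x}^1) > r^\star_i - f_i(\mathbf{x}^2)$, is only possible if $\mathbf{r}^\star \leq f(\mathbf{x}^1)$ componentwise; then $\mathbf{r}^\star \neq f(\mathbf{x}^1)$ contradicts the non-dominated branch at $\mathbf{x}^1$, while $\mathbf{r}^\star = f(\mathbf{x}^1)$ yields $\mathbf{r}^\star < f(\mathbf{x}^2)$ componentwise and contradicts the non-dominated branch at $\mathbf{x}^2$. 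The dominated/dominated case is symmetric via an argmin for $B(\mathbf{x}^2)$, after first showing $B(\mathbf{x}^2) > 0$: otherwise some $\mathbf{r}^\star \in Y$ would satisfy $f(\mathbf{x}^2) \leq \mathbf{r}^\star$, and combining this with any actually dominating $\mathbf{r}' \in Y$ (so $\mathbf{r}' \leq f(\mathbf{x}^2)$ with a strict coordinate) would produce $\mathbf{r}' \leq \mathbf{r}^\star$ with a strict coordinate, contradicting the incomparability of $Y$. At least one coordinate then has $f_i(\mathbf{x}^2) > r^\star_i$, which carries a strict decrease when $f(\mathbf{x}^1)$ replaces $f(\mathbf{x}^2)$. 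The mixed case reuses $B(\mathbf{x}^2) > 0$ to get $-A(\mathbf{x}^1) \leq 0 < B(\mathbf{x}^2)$.

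The main obstacle is precisely this upgrade from $\leq$ to $<$: the pointwise monotonicity is mechanical, but the strict version requires locating the argmin reference point and invoking the pairwise incomparability built into the hypotheses on $Y$ to rule out the degenerate configuration in which $\mathbf{r}^\star$ coincides with or dominates $f(\mathbf{x}^1)$; I would write the non-dominated and dominated branches in parallel, via the substitution $r_i - f_i \leftrightarrow f_i - r_i$, since the structural argument is identical.
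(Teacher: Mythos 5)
Your proof is correct and follows essentially the same route as the paper's: a case analysis on which branch of \(\phi^\text{Do}_Y\) is active at \(\mathbf{x}^1\) and \(\mathbf{x}^2\) (you make explicit the exclusion of the dominated/non-dominated configuration, which the paper leaves implicit), componentwise monotonicity of the clipped terms for the weak inequalities in (ii), and the pairwise incomparability of \(Y\) to upgrade to strict inequalities in (i). The only local deviation is mechanical: you obtain strictness by contradiction at a single argmin reference point, where the paper instead proves the strict sum inequality for \emph{every} \(\mathbf{r} \in Y\) and then chains the two argmins. One point in your favor: in the mixed case you write \(\psi^\text{Do}_Y(\mathbf{x}^1) \leq 0 < \psi^\text{Do}_Y(\mathbf{x}^2)\) via your incomparability argument for \(B(\mathbf{x}^2) > 0\), whereas the paper's Case 1 asserts \(\psi^\text{Do}_Y(\mathbf{x}^1) < 0 \leq \psi^\text{Do}_Y(\mathbf{x}^2)\); your placement of the strict inequality is the more careful one, since \(\psi^\text{Do}_Y(\mathbf{x}^1) = 0\) is possible when \(f(\mathbf{x}^1)\) coincides with an element of \(Y\) (under the paper's convention \(\mathbf{r} \nleq \mathbf{r}\), so such a point sits on the non-dominated branch), and your \(B(\mathbf{x}^2) > 0\) step is exactly what closes that edge case.
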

\begin{proof} Let start with the first case. Suppose that \(\mathbf{x}^1 \prec \prec \mathbf{x}^2\) and consider the following situations:
\begin{enumerate}
    \item \(f(\mathbf{x}^1)\) is not dominated by any element of \(Y\) and there exists at least one \(\mathbf{r} \in Y\) such that \(\mathbf{r} \leq f(\mathbf{x}^2)\). By definition, \(\psi^\text{Do}_Y(\mathbf{x}^1) < 0 \leq \psi^\text{Do}_Y(\mathbf{x}^2)\).
    \item \(f(\mathbf{x}^1)\) and \(f(\mathbf{x}^2)\) are dominated by at least one element of \(Y\). Let \(\mathbf{r} \in Y\). Then for all \(i = 1, 2, \ldots, m\),
    \[f_i(\mathbf{x}^1) < f_i(\mathbf{x}^2) \Longleftrightarrow f_i(\mathbf{x}^1) - r_i < f_i(\mathbf{x}^2) - r_i \Rightarrow \max \left\{0, f_i(\mathbf{x}^1) - r_i\right\} \leq \max \left\{0, f_i(\mathbf{x}^2) - r_i\right\}.\]
    All elements of \(Y\) being non-dominated between each other, there exists at least one objective index \(i_0 \in \{1, 2, \ldots, m\}\) such that \(f_{i_0}(\mathbf{x}^1) - r_{i_0} > 0 \Rightarrow f_{i_0}(\mathbf{x}^2) - r_{i_0} > f_{i_0}(\mathbf{x}^1) - r_{i_0} > 0\) (by assumption). Hence,
    \[\sum_{i = 1}^{m} \max\left\{0, f_i(\mathbf{x}^1) - r_i\right\} < \sum_{i = 1}^{m} \max \left\{0, f_i(\mathbf{x}^2) - r_i\right\}.\]
    This inequality is satisfied for all \(\mathbf{r} \in Y\). Let \((j_1, j_2) \in \{1, 2, \ldots, |Y|\}^2\) such that
    \[\psi_Y^\text{Do}(\mathbf{x}^1) = \min_{\mathbf{r} \in Y} \sum_{i = 1}^{m} \max\left\{0, f_i(\mathbf{x}^1) - r_i\right\} = \sum_{i = 1}^{m} \max \left\{0, f_i(\mathbf{x}^1) - r_i^{j_1}\right\},\]
    and,
    \[\psi_Y^\text{Do}(\mathbf{x}^2) = \min_{\mathbf{r} \in Y} \sum_{i = 1}^{m} \max \left\{0, f_i(\mathbf{x}^2) - r_i\right\} = \sum_{i = 1}^{m} \max \left\{0, f_i(\mathbf{x}^2) - r_i^{j_2}\right\}.\]
    Then:
    \[\begin{split}\psi_Y^\text{Do}(\mathbf{x}^1) = \sum_{i = 1}^{m} \max \left\{0, f_i(\mathbf{x}^1) -r_i^{j_1}\right\} \leq \sum_{i = 1}^{m} \max \left\{0, f_i(\mathbf{x}^1) - r_i^{j_2}\right\} < & \sum_{i = 1}^{m} \max \left\{0, f_i(\mathbf{x}^2) - r_i^{j_2}\right\} \\
    & = \psi_Y^\text{Do}(\mathbf{x}^2).\end{split}\]
    \item \(f_i(\mathbf{x}^1)\) and \(f(\mathbf{x}^2)\) are not dominated by any element of \(Y\). Let \(\mathbf{r} \in Y\). Then for all \(i = 1,2, \ldots, m\),
    \[f_i(\mathbf{x}^1) < f_i(\mathbf{x}^2) \Longleftrightarrow r_i - f_i(\mathbf{x}^1) > r_i - f_i(\mathbf{x}^2) \Rightarrow \max \left\{0, r_i - f_i(\mathbf{x}^1)\right\} \geq \max(0, r_i - f_i(\mathbf{x}^2)).\]
    As \(f(\mathbf{x}^2)\) is not dominated by \(\mathbf{r}\) and \(\mathbf{x}^1\) strictly dominates \(\mathbf{x}^2\), there exists at least one component objective index \(i_0 \in \{1, 2, \ldots, m\}\) such that \(r_{i_0} - f_{i_0}(\mathbf{x}^1) > r_{i_0} - f_{i_0}(\mathbf{x}^2) \geq 0\). Hence
    \[\sum_{i = 1}^{m} \max\left\{0, r_i - f_i(\mathbf{x}^1)\right\} > \sum_{i = 1}^{m} \max \left\{0, r_i - f_i(\mathbf{x}^2)\right\}.\]
    Let \((j_1, j_2) \in \{1, 2, \ldots, |Y|\}^2\) such that
    \[\min_{\mathbf{r} \in Y} \sum_{i = 1}^{m} \max \left\{0, r_i - f_i(\mathbf{x}^1)\right\} = \sum_{i = 1}^{m} \max\left\{0, r_i^{j_1} - f_i(\mathbf{x}^1)\right\},\]
    and,
    \[\min_{\mathbf{r} \in Y} \sum_{i = 1}^{m} \max\left\{0, r_i - f_i(\mathbf{x}^2)\right\} = \sum_{i = 1}^{m} \max\left\{0, r_i^{j_2} - f_i(\mathbf{x}^2))\right\}.\]
    Then:
    \[\begin{split}
    \psi_Y^\text{Do}(\mathbf{x}^1) = - \sum_{i = 1}^{m} \max\left\{0, r_i^{j_1} - f_i(\mathbf{x}^1)\right\} < - \sum_{i = 1}^{m} \max\left\{0, r_i^{j_1} - f_i(\mathbf{x}^2)\right\} \leq & - \sum_{i = 1}^{m} \max \left\{0, r_i^{j_2} - f_i(\mathbf{x}^2)\right\} \\
    & = \psi_Y^\text{Do}(\mathbf{x}^2).  
    \end{split}\]
\end{enumerate}
The second part of the lemma follows the same reasoning. Note that in this case, the inequality is not strict in Cases \(2\) and \(3\), as illustrated in Figure~\ref{fig:counter-examples_for_psi_equality}.
\end{proof}

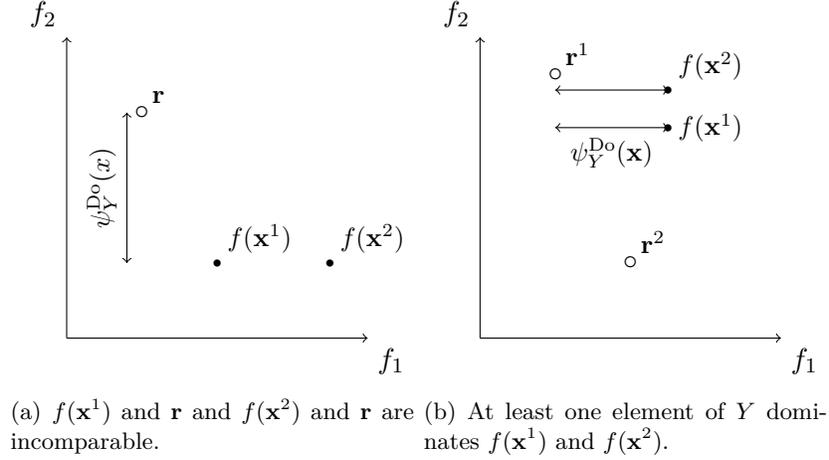
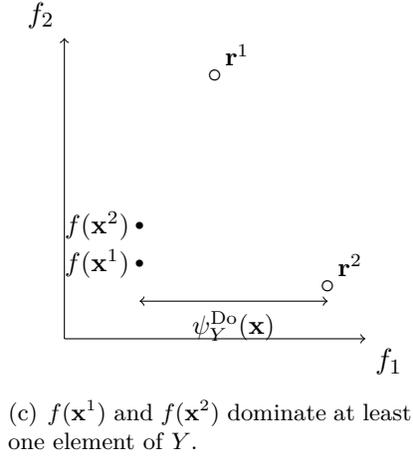
\begin{figure}[!th]
      \centering
      \subfigure[\(f(\mathbf{x}^1)\) and \(\mathbf{r}\) and \(f(\mathbf{x}^2)\) and \(\mathbf{r}\) are incomparable.]{
      \begin{tikzpicture}
        \draw [->] (0,0) -- (4,0) node[below right]{\(f_1\)};
		\draw [->] (0,0) -- (0,4) node[above left]{\(f_2\)};

        \draw (1, 3) node{\(\circ\)} node[above right]{\small{\(\mathbf{r}\)}};
        \draw (2, 1) node{\tiny{\(\bullet\)}} node[above right]{\small{\(f(\mathbf{x}^1)\)}};
        \draw (3.5, 1) node{\tiny{\(\bullet\)}} node[above right]{\small{\(f(\mathbf{x}^2)\)}};

        \draw[<->] (0.8, 1) -- (0.8, 3) node[midway, sloped, above]{\small{\(\psi_Y^\text{Do}(x)\)}}; 
      \end{tikzpicture}}
       \subfigure[At least one element of \(Y\) dominates \(f(\mathbf{x}^1)\) and \(f(\mathbf{x}^2)\).]{
      \begin{tikzpicture}
        \draw [->] (0,0) -- (4,0) node[below right]{\(f_1\)};
		\draw [->] (0,0) -- (0,4) node[above left]{\(f_2\)};

        \draw (1, 3.5) node{\(\circ\)} node[above right]{\small{\(\mathbf{r}^1\)}};
        \draw (2, 1) node{\(\circ\)} node[above right]{\small{\(\mathbf{r}^2\)}};
        \draw (2.5, 2.8) node{\tiny{\(\bullet\)}} node[right]{\small{\(f(\mathbf{x}^1)\)}};
        \draw (2.5, 3.3) node{\tiny{\(\bullet\)}} node[above right]{\small{\(f(\mathbf{x}^2)\)}};

        \draw[<->] (1, 3.3) -- (2.5, 3.3);
        \draw[<->] (1, 2.8) -- (2.5, 2.8) node[midway, below]{{\small{\(\psi_Y^\text{Do}(\mathbf{x})\)}}};
      \end{tikzpicture}}
      \subfigure[\(f(\mathbf{x}^1)\) and \(f(\mathbf{x}^2)\) dominate at least one element of \(Y\).]{
      \begin{tikzpicture}
        \draw [->] (0,0) -- (4,0) node[below right]{\(f_1\)};
		\draw [->] (0,0) -- (0,4) node[above left]{\(f_2\)};

        \draw (2, 3.5) node{\(\circ\)} node[above right]{\small{\(\mathbf{r}^1\)}};
        \draw (3.5, 0.7) node{\(\circ\)} node[above right]{\small{\(\mathbf{r}^2\)}};
        \draw (1, 1) node{\tiny{\(\bullet\)}} node[left]{\small{\(f(\mathbf{x}^1)\)}};
        \draw (1, 1.5) node{\tiny{\(\bullet\)}} node[left]{\small{\(f(\mathbf{x}^2)\)}};

        \draw[<->] (1, 0.5) -- (3.5, 0.5) node[midway, below]{{\small{\(\psi_Y^\text{Do}(\mathbf{x})\)}}};
      \end{tikzpicture}}
      \caption{Illustration for a biobjective minimization problem: \(\mathbf{x}^1 \prec \mathbf{x}^2\) but \(\psi_Y^\text{Do}(\mathbf{x}^1) = \psi_Y^\text{Do}(\mathbf{x}^2)\).}
      \label{fig:counter-examples_for_psi_equality}
  \end{figure}

Finally, the following theorem shows that solving the \(R_Y^\text{Do}\) single-objective formulation could result in a locally Pareto optimal solution. Note that, in practice, one does not solve \(R^\text{Do}_Y\) to optimality.

\begin{theorem}[adapted from~\cite{AuSaZg2010a}] If \(\hat{\mathbf{x}} \in \Omega\) is the unique solution of \(R^\text{Do}_Y\) for some reference set \(Y \subset \mathbb{R}^m\) satisfying Definition~\ref{def:single_objective_formulation}, then \(\bar{\mathbf{x}}\) is Pareto optimal for (\ref{ref:MOP}). 
\end{theorem}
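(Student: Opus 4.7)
The plan is to argue by contradiction using Lemma~\ref{lem:formulation_dominance_prop}(ii), which is exactly the tool that translates Pareto dominance into a comparison of the values of $\psi^\text{Do}_Y$. The argument is in the same spirit as the corresponding result in~\cite{AuSaZg2010a}, but it is somewhat simpler here because we only need the weak (non-strict) version of the dominance property from Lemma~\ref{lem:formulation_dominance_prop}, as uniqueness of the minimizer will provide the strict gap on its own.

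First, I would suppose, for the sake of contradiction, that $\hat{\mathbf{x}} \in \Omega$ is the unique minimizer of $R^\text{Do}_Y$ but is \emph{not} Pareto optimal for~(\ref{ref:MOP}). Unfolding the definition of Pareto optimality, this supplies a feasible $\mathbf{x}' \in \Omega$ with $\mathbf{x}' \prec \hat{\mathbf{x}}$, that is, $f(\mathbf{x}') \leq f(\hat{\mathbf{x}})$ component-wise with at least one strict inequality. In particular, $f(\mathbf{x}') \neq f(\hat{\mathbf{x}})$, which implies $\mathbf{x}' \neq \hat{\mathbf{x}}$; this step is small but essential, because the uniqueness hypothesis will only be useful when applied to a point distinct from $\hat{\mathbf{x}}$.

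Next, I would invoke Lemma~\ref{lem:formulation_dominance_prop}(ii) on the pair $(\mathbf{x}', \hat{\mathbf{x}})$ to conclude that
\[
\psi^\text{Do}_Y(\mathbf{x}') \leq \psi^\text{Do}_Y(\hat{\mathbf{x}}).
\]
On the other hand, the hypothesis that $\hat{\mathbf{x}}$ is the \emph{unique} solution of $R^\text{Do}_Y$ means that $\psi^\text{Do}_Y(\hat{\mathbf{x}}) < \psi^\text{Do}_Y(\mathbf{x})$ for every $\mathbf{x} \in \Omega \setminus \{\hat{\mathbf{x}}\}$, and in particular for $\mathbf{x} = \mathbf{x}'$. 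Combining these two strict/non-strict inequalities yields $\psi^\text{Do}_Y(\hat{\mathbf{x}}) < \psi^\text{Do}_Y(\mathbf{x}') \leq \psi^\text{Do}_Y(\hat{\mathbf{x}})$, which is the desired contradiction.

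I do not anticipate a genuine obstacle here: the whole content of the theorem is packaged inside Lemma~\ref{lem:formulation_dominance_prop}(ii), and all that remains is a one-line contrapositive argument exploiting uniqueness. The only mild subtlety to flag in the write-up is the need to verify that $\mathbf{x}' \neq \hat{\mathbf{x}}$ before applying uniqueness, which follows immediately from the strict component in the dominance relation $\mathbf{x}' \prec \hat{\mathbf{x}}$.
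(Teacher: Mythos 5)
Your proof is correct and takes essentially the same approach as the paper: the paper runs the identical argument in contrapositive form, taking an arbitrary \(\mathbf{x} \in \Omega\) with \(\mathbf{x} \neq \hat{\mathbf{x}}\), using uniqueness to obtain \(\psi^\text{Do}_Y(\hat{\mathbf{x}}) < \psi^\text{Do}_Y(\mathbf{x})\), and invoking Lemma~\ref{lem:formulation_dominance_prop} to conclude that \(\mathbf{x}\) cannot dominate \(\hat{\mathbf{x}}\). Your explicit verification that \(\mathbf{x}' \neq \hat{\mathbf{x}}\) before applying uniqueness is a sound detail that the paper's write-up leaves implicit.
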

\begin{proof}
    Let \(\hat{\mathbf{x}} \in \Omega\) be the unique solution of \(R^\text{Do}_Y\). Let \(\mathbf{x} \in \Omega\) with \(\mathbf{x} \neq \hat{\mathbf{x}}\). Then \(\psi^\text{Do}_Y(\hat{\mathbf{x}}) < \psi^\text{Do}_Y(\mathbf{x})\). By Lemma~\ref{lem:formulation_dominance_prop} and the fact that \(\mathbf{x} \neq \hat{\mathbf{x}}\), there exists at least one index \(i_0 \in \{1, 2, \ldots, m\}\) such that \(f_{i_0}(\hat{\mathbf{x}}) < f_{i_0}(\mathbf{x})\) and \(\mathbf{x}\) does not dominate \(\hat{\mathbf{x}}\). The point \(\hat{\mathbf{x}}\) is therefore Pareto optimal.
\end{proof}

\section{Search strategies for the DMulti-MADS algorithm}
\label{sect:Search steps}

When designing an algorithm that solves a multiobjective problem, several goals are considered to obtain a good Pareto set approximation.
\begin{itemize}
    \item Ideally, all elements of the Pareto set approximation should be (locally) Pareto optimal, i.e., the distance between the Pareto front and its representation in the objective space should be minimized.
    \item It should contain the extreme solutions of the Pareto set, i.e., solutions that minimize each objective component separately. Such solutions provide information about the ``limit'' trade-offs the decision maker can face when a choice has to be made.
    \item Its representation in the objective space should be dense, i.e., the gaps between the different solutions should be minimized.
\end{itemize}

Originally, DMulti-MADS tries to satisfy these different goals by using the information provided by the frame size parameters of the iterate list. Their values act as indicators to quantify the progression of each solution towards a locally Pareto optimal point. Furthermore, DMulti-MADS takes into account the objective values of each element of the iterate list to fulfill largest gaps into the current Pareto front approximation~\cite{BiLedSa2020}. However, restricting to poll steps only as it was proposed in the implementation of DMulti-MADS may be too rigid in exploring the decision space and not precise enough to efficiently achieve the three goals mentioned above.

This section describes two search strategies for DMulti-MADS, based on the single-objective formulations described in Section~\ref{sect:Single-objective reformulations}.

\subsection{The MultiMADS search strategy}\label{subsec:MultiMADS_search_strategy}

Inspired by the BiMADS and MultiMADS algorithms, the MultiMADS search strategy introduces an additional search step in DMulti-MADS that aims to accelerate the algorithm progress towards a good solution set. This search adaptively switches between a single-objective distance formulation subproblem optimization presented in Section~\ref{sect:Single-objective reformulations} and an extreme point exploration, depending on the nature of the current frame incumbent \(\mathbf{x}^k\) at iteration \(k\). Algorithm~\ref{alg:MultiMADS_search_strategy} details the procedure for a fixed \(k\).

\begin{figure}[!th]
  \begin{algorithm}[H]\small
   \caption{The MultiMADS search strategy.}
   \begin{algorithmic}[1]
    \STATE \textbf{Initialization}: Let \((\mathbf{x}^k, \Delta^k) \in L^k\) be the current frame incumbent chosen by DMulti-MADS at iteration \(k\) and \(V^k \subset \mathcal{X}\) the set of points already evaluated by DMulti-MADS from the beginning of iteration \(k\).
    \STATE \textbf{Determination of the nature of \(\mathbf{x}^k\)}. If \(\mathbf{x}^k\) is a current extreme solution, i.e., there exists at least one objective index \(i_0 \in \{1, 2, \ldots, m\}\) for which \(f_{i_0}(\mathbf{x}^k) = \displaystyle\min_{(\mathbf{x}, \Delta) \in L^k} f_{i_0}(\mathbf{x})\), go to \textit{extreme point exploration}; else go to \textit{single-objective distance formulation exploration}.
    \STATE \textbf{Extreme point exploration}.
    \FOR{objective index \(i \in \{1, 2, \ldots, m\}\) for which \(\mathbf{x}^k\) is a current extreme solution}
    \STATE Solve
    \[\min_{\mathbf{x} \in \mathcal{X}^k} f_i(\mathbf{x}),\]
    using a subsolver procedure, starting from \(\mathbf{x}^k\), where \(\mathcal{X}^k \subseteq \mathcal{X}\) is a subset of \(X\), depending on \(\Delta^k\), \(V^k\) and the choice of the subsolver.
    \STATE Collect all solution points \(S^\text{tmp} \subset \mathcal{X} \cap M^k\) proposed by the subsolver. Set \(S^k := S^k \cup S^\text{tmp}\).
    \ENDFOR
    \STATE If a success criterion is met, the search procedure may terminate.
    \STATE Exit the procedure.
    \STATE \textbf{Single-objective distance formulation exploration.} Compute the reference objective vector
    \[\mathbf{r}^k := \texttt{computeSingleObjectiveDistanceFormReferencePoint}(\mathbf{x}^k, L^k) \text{ (see Algorithm~\ref{alg:compute_single_obj_distance_form_reference_pt})}.\]
    Solve
    \[\min_{\mathbf{x} \in \mathcal{X}^k} \bar{\psi}_{\mathbf{r}^k}(\mathbf{x}),\]
    using a subsolver procedure, starting from \(\mathbf{x}^k\), where \(\mathcal{X}^k \subseteq \mathcal{X}\) is a subset of \(\mathcal{X}\), depending on \(\Delta^k\), \(V^k\) and the choice of the subsolver.
    \STATE Collect all solution points \(S^\text{tmp} \subset \mathcal{X} \cap M^k\) proposed by the subsolver. Set \(S^k := S^k \cup S^\text{tmp}\). If a success criterion is met, the search procedure may terminate.
   \end{algorithmic}
   \label{alg:MultiMADS_search_strategy}
  \end{algorithm}
\end{figure}

  The procedure starts by determining the nature of the current incumbent \(\mathbf{x}^k\). If it is a current extreme solution, an attempt is made to extend the current Pareto front approximation by trying to minimize each objective for which \(\mathbf{x}^k\) is marked as extreme. Otherwise, the MultiMADS search strategy computes a reference objective vector \(\mathbf{r}^k \in \mathbb{R}^m\) using the iterate list \(L^k\) and the current incumbent \(\mathbf{x}^k\).
  
  Algorithm~\ref{alg:compute_single_obj_distance_form_reference_pt} details this process. This procedure is an adaptation of the reference point determination procedure used for BiMADS~\cite{AuSaZg2008a} to more than two objectives. It returns a reference vector \(\mathbf{r}^k\) satisfying \(f(\mathbf{x}^k) \leq \mathbf{r}^k\) or \(\mathbf{r}^k = f(\mathbf{x}^k)\). Note that \(\mathbf{r}^k\) is never computed if \(\mathbf{x}^k\) is an extreme point of the current Pareto front approximation. By approximately solving the associated single-objective distance formulation subproblem, one can hope to converge towards the Pareto front, or to fill the gaps in the current Pareto front approximation around \(f(\mathbf{x}^k)\).

  \begin{figure}[!th]
  \begin{algorithm}[H]\small
   \caption{\(\texttt{computeSingleObjectiveDistanceFormReferencePoint}(\mathbf{x}^k, L^k)\)}
   \begin{algorithmic}
    \STATE Let \(\mathbf{r}^k := \mathbf{0}.\)
    \FOR{\(i = 1, 2, \ldots, m\)}
        \STATE Order \(L^k = \{(\mathbf{x}^1, \Delta^1), (\mathbf{x}^2, \Delta^2), \ldots, (\mathbf{x}^{|L^k|}, \Delta^{|L^k|}\}\) by increasing objective component value \(f_i\), i.e., \(f_i(\mathbf{x}^1) \leq f_i(\mathbf{x}^2) \leq \ldots \leq f_i(\mathbf{x}^{|L^k|})\).
        
        Let \(j_k \in \{1,\ldots, |L^k|\}\) be the list index of the sorted iterate list corresponding to \((\mathbf{x}^{j_k}, \Delta^{j_k}) = (\mathbf{x}^{k}, \Delta^{k})\).
        
        Define the following list index \(j_\text{select} := \min \left\{j_k+1, |L^k|\right\}\).

        Set \(r_i := f_i(\mathbf{x}^{j_\text{select}})\).
    \ENDFOR
    \STATE Return \(\mathbf{r}^k\).
   \end{algorithmic}
   \label{alg:compute_single_obj_distance_form_reference_pt}
  \end{algorithm}
  \end{figure}

  All the points proposed by the subsolver are stored into \(S^k\) and evaluated by the blackbox. When a success criterion is met, the search may terminate, i.e., the other search strategies that follow this search procedure are not executed; the poll is skipped and DMulti-MADS goes to the update step as described in Algorithm~\ref{alg:summary_DMulti-MADS_algorithm}.
  
  The efficiency of this procedure depends on the choice of the subsolver, which will be detailed in Section~\ref{subsec:subsolvers_algo}.

  Figure~\ref{fig:zones_interest_multimads_strategy} illustrates the zones of interest prioritized by the subsolver for different choices of \(\mathbf{x}^k\).

  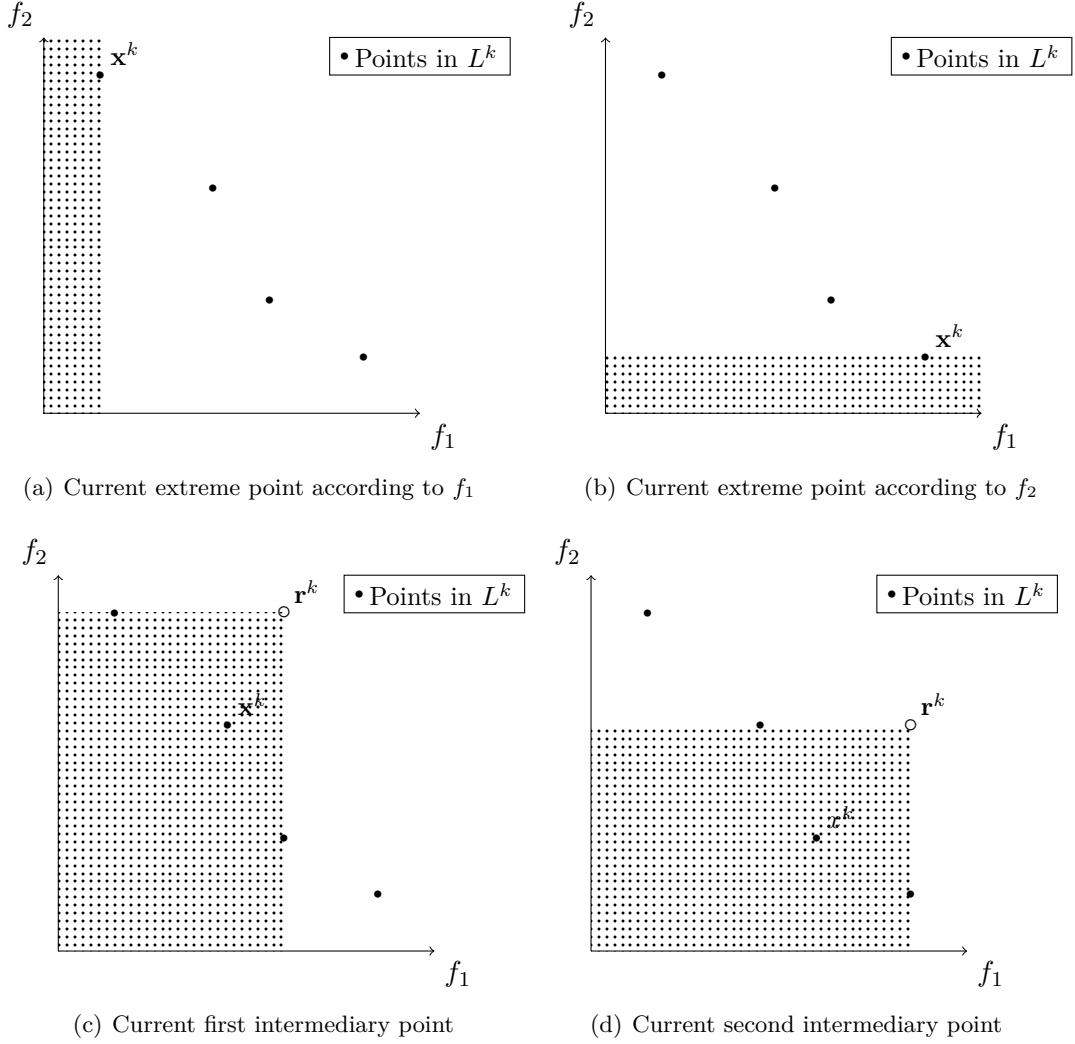
\begin{figure}[!ht]
      \centering
      \subfigure[Current extreme point according to \(f_1\)]{
      \begin{tikzpicture}
        \draw [->] (0,0) -- (5,0) node[below right]{\(f_1\)};
		\draw [->] (0,0) -- (0,5) node[above left]{\(f_2\)};

        \draw (0.75, 4.5) node{\tiny{\(\bullet\)}} node[above right]{\small{\(\mathbf{x}^k\)}};
        \draw (2.25, 3) node{\tiny{\(\bullet\)}};
        \draw (3, 1.5) node{\tiny{\(\bullet\)}};
        \draw (4.25, 0.75) node{\tiny{\(\bullet\)}};

        \fill [pattern=dots, pattern color=black] (0, 0) rectangle (0.75, 5);

        \draw (4, 4.75) node{{\tiny \(\bullet\)}} node[right] {\small{Points in \(L^k\)}};
        \draw (3.8, 4.5) rectangle (6.2, 5);
      \end{tikzpicture}}
      \quad
      \subfigure[Current extreme point according to \(f_2\)]{
      \begin{tikzpicture}
        \draw [->] (0,0) -- (5,0) node[below right]{\(f_1\)};
		\draw [->] (0,0) -- (0,5) node[above left]{\(f_2\)};

        \draw (0.75, 4.5) node{\tiny{\(\bullet\)}};
        \draw (2.25, 3) node{\tiny{\(\bullet\)}};
        \draw (3, 1.5) node{\tiny{\(\bullet\)}};
        \draw (4.25, 0.75) node{\tiny{\(\bullet\)}} node[above right]{\small{\(\mathbf{x}^k\)}};

        \fill [pattern=dots, pattern color=black] (0, 0) rectangle (5, 0.75);

        \draw (4, 4.75) node{{\tiny \(\bullet\)}} node[right] {\small{Points in \(L^k\)}};
        \draw (3.8, 4.5) rectangle (6.2, 5);
      \end{tikzpicture}}
      \quad
      \subfigure[Current first intermediary point]{
      \begin{tikzpicture}
        \draw [->] (0,0) -- (5,0) node[below right]{\(f_1\)};
		\draw [->] (0,0) -- (0,5) node[above left]{\(f_2\)};

        \draw (0.75, 4.5) node{\tiny{\(\bullet\)}};
        \draw (2.25, 3) node{\tiny{\(\bullet\)}} node[above right]{\small{\(\mathbf{x}^k\)}};
        \draw (3, 1.5) node{\tiny{\(\bullet\)}};
        \draw (4.25, 0.75) node{\tiny{\(\bullet\)}};

        \draw (3, 4.5) node{\(\circ\)} node[above right]{\small{\(\mathbf{r}^k\)}};;

        \fill [pattern=dots, pattern color=black] (0, 0) rectangle (3, 4.5);

        \draw (4, 4.75) node{{\tiny \(\bullet\)}} node[right] {\small{Points in \(L^k\)}};
        \draw (3.8, 4.5) rectangle (6.2, 5);
      \end{tikzpicture}}
      \subfigure[Current second intermediary point]{
      \begin{tikzpicture}
        \draw [->] (0,0) -- (5,0) node[below right]{\(f_1\)};
		\draw [->] (0,0) -- (0,5) node[above left]{\(f_2\)};

        \draw (0.75, 4.5) node{\tiny{\(\bullet\)}};
        \draw (2.25, 3) node{\tiny{\(\bullet\)}};
        \draw (3, 1.5) node{\tiny{\(\bullet\)}} node[above right]{\small{\(x^k\)}};
        \draw (4.25, 0.75) node{\tiny{\(\bullet\)}};

        \draw (4.25, 3) node{\(\circ\)} node[above right]{\small{\(\mathbf{r}^k\)}};;

        \fill [pattern=dots, pattern color=black] (0, 0) rectangle (4.25, 3);

        \draw (4, 4.75) node{{\tiny \(\bullet\)}} node[right] {\small{Points in \(L^k\)}};
        \draw (3.8, 4.5) rectangle (6.2, 5);
      \end{tikzpicture}}
      \caption{Zones of interest (dotted) in the objective space for the MultiMADS search strategy for different configuration of incumbent points \(\mathbf{x}^k\) in an iterate list \(L^k\) for a biobjective minimization problem.}
      \label{fig:zones_interest_multimads_strategy}
  \end{figure}

\subsection{The dominance move search strategy}\label{subsec:DoM_search_strategy}

Algorithm~\ref{alg:DoM_search_strategy} describes the dominance move search strategy. The procedure starts by determining a reference objective set, using the iterate list \(L^k\) and the current incumbent solution \(\mathbf{x}^k\). If the iterate list contains a single element, \(f(\mathbf{x}^k)\) is used as the reference set. Otherwise, the reference set contains all objective vectors \(f(\mathbf{x})\) corresponding to each element \((\mathbf{x}, \Delta)\) of the iterate list \(L^k\), at the exception of \(f(\mathbf{x}^k)\). Then, the procedure solves the corresponding single-objective dominance formulation subproblem using a single-objective subsolver (see Section~\ref{subsec:subsolvers_algo}).

\begin{figure}[!th]
  \begin{algorithm}[H]\small
   \caption{The dominance move search strategy}
   \begin{algorithmic}[1]
    \STATE \textbf{Initialization}: Let \((\mathbf{x}^k, \Delta^k) \in L^k\) be the current frame incumbent chosen by DMulti-MADS (see~Algorithm~\ref{alg:summary_DMulti-MADS_algorithm}) at iteration \(k\) and \(V^k \subset \mathcal{X}\) the set of points already evaluated by DMulti-MADS since the beginning of iteration \(k\).
    \STATE \textbf{Determination of the reference objective list \(R^{k}\)}. If \(|L^k| = 1\), set \(R^k := \{f(\mathbf{x}^k)\}\). Otherwise, set \(R^k := \{f(\mathbf{x}): (\mathbf{x}, \Delta) \in L^k\} \setminus \{f(\mathbf{x}^k)\}\).
    \STATE \textbf{Single-objective dominance move exploration}.
    \STATE Solve
    \[\min_{\mathbf{x} \in \mathcal{X}^k} \psi^\text{Do}_{R^k}(\mathbf{x})\]
    using a subsolver procedure, starting from \(\mathbf{x}^k\), where \(\mathcal{X}^k \subseteq \mathcal{X}\) is a subset of \(\mathcal{X}\), depending on \(\Delta^k\), \(V^k\) and the choice of the subsolver.
    Collect all solution points \(S^\text{tmp} \subset \mathcal{X} \cap M^k\) proposed by the subsolver. Set \(S^k := S^k \cup S^\text{tmp}\).
   \end{algorithmic}
   \label{alg:DoM_search_strategy}
  \end{algorithm}
\end{figure}

All the points proposed by the subsolver are stored in the search set \(S^k\). As for the MultiMADS search strategy, the search may terminate if a success criterion is met. In this case, DMulti-MADS skips all the successive search strategies, and the poll step, and moves to the parameter update (see Algorithm~\ref{alg:summary_DMulti-MADS_algorithm}).

If the dominance move search strategy considers all elements of the iterate list \(L^k\) for the construction of the reference objective set \(R^k\), then for all (\(\mathbf{x}, \Delta) \in L^k\), \(\psi^\text{Do}_{R^k}(\mathbf{x}) = 0\). By excluding \(f(\mathbf{x}^k)\) from \(R^k\), the search strategy prioritizes the exploration of a zone near \(f(\mathbf{x}^k)\),  while retaining the possibility for the subsolver to search for other points in non-dominated parts by \(R^k\) of the objective space. Figure~\ref{fig:zones_interest_DoM_strategy} illustrates the zones of interest in the objective space for a biobjective minimization problem.

  \begin{figure}[!ht]
      \centering
      \subfigure[]{
      \begin{tikzpicture}
        \draw [->] (0,0) -- (5,0) node[below right]{\(f_1\)};
		\draw [->] (0,0) -- (0,5) node[above left]{\(f_2\)};

        \draw (0.75, 4.5) node{\tiny{\(\bullet\)}} node[above right]{\small{\(\mathbf{x}^k\)}};
        \draw (2.25, 3) node{\tiny{\(\bullet\)}};
        \draw (3, 1.5) node{\tiny{\(\bullet\)}};
        \draw (4.25, 0.75) node{\tiny{\(\bullet\)}};

        \fill [pattern=dots, pattern color=black] (0, 0) -- (5, 0) -- (5, 0.75) -- (4.25, 0.75) -- (4.25, 1.5) -- (3, 1.5) -- (3, 3) -- (2.25, 3) -- (2.25, 5) -- (0, 5) -- cycle;

        \draw (4, 4.75) node{{\tiny \(\bullet\)}} node[right] {\small{Points in \(L^k\)}};
        \draw (3.8, 4.5) rectangle (6.2, 5);
      \end{tikzpicture}}
      \quad
      \subfigure[]{
      \begin{tikzpicture}
        \draw [->] (0,0) -- (5,0) node[below right]{\(f_1\)};
		\draw [->] (0,0) -- (0,5) node[above left]{\(f_2\)};

        \draw (0.75, 4.5) node{\tiny{\(\bullet\)}};
        \draw (2.25, 3) node{\tiny{\(\bullet\)}};
        \draw (3, 1.5) node{\tiny{\(\bullet\)}};
        \draw (4.25, 0.75) node{\tiny{\(\bullet\)}} node[above right]{\small{\(\mathbf{x}^k\)}};

        \fill [pattern=dots, pattern color=black] (0, 0) -- (5, 0) -- (5, 1.5) -- (3, 1.5) -- (3, 3) -- (2.25, 3) -- (2.25, 4.5) -- (0.75, 4.5) -- (0.75, 5) -- (0, 5) -- cycle;

        \draw (4, 4.75) node{{\tiny \(\bullet\)}} node[right] {\small{Points in \(L^k\)}};
        \draw (3.8, 4.5) rectangle (6.2, 5);
      \end{tikzpicture}}
      \quad
      \subfigure[]{
      \begin{tikzpicture}
        \draw [->] (0,0) -- (5,0) node[below right]{\(f_1\)};
		\draw [->] (0,0) -- (0,5) node[above left]{\(f_2\)};

        \draw (0.75, 4.5) node{\tiny{\(\bullet\)}};
        \draw (2.25, 3) node{\tiny{\(\bullet\)}} node[above right]{\small{\(\mathbf{x}^k\)}};
        \draw (3, 1.5) node{\tiny{\(\bullet\)}};
        \draw (4.25, 0.75) node{\tiny{\(\bullet\)}};

        \fill [pattern=dots, pattern color=black] (0, 0) -- (5, 0) -- (5, 1.5) -- (3, 1.5) -- (3, 3) -- (3, 4.5) -- (0.75, 4.5) -- (0.75, 4.5) -- (0.75, 5) -- (0, 5) -- cycle;

        \draw (4, 4.75) node{{\tiny \(\bullet\)}} node[right] {\small{Points in \(L^k\)}};
        \draw (3.8, 4.5) rectangle (6.2, 5);
      \end{tikzpicture}}
      \subfigure[]{
      \begin{tikzpicture}
        \draw [->] (0,0) -- (5,0) node[below right]{\(f_1\)};
		\draw [->] (0,0) -- (0,5) node[above left]{\(f_2\)};

        \draw (0.75, 4.5) node{\tiny{\(\bullet\)}};
        \draw (2.25, 3) node{\tiny{\(\bullet\)}};
        \draw (3, 1.5) node{\tiny{\(\bullet\)}} node[above right]{\small{\(\mathbf{x}^k\)}};
        \draw (4.25, 0.75) node{\tiny{\(\bullet\)}};

        \fill [pattern=dots, pattern color=black] (0, 0) -- (5, 0) -- (5, 0.75) -- (4.25, 0.75) -- (4.25, 3) -- (3, 3) -- (2.25, 3) -- (2.25, 4.5) -- (0.75, 4.5) -- (0.75, 5) -- (0, 5) -- cycle;

        \draw (4, 4.75) node{{\tiny \(\bullet\)}} node[right] {\small{Points in \(L^k\)}};
        \draw (3.8, 4.5) rectangle (6.2, 5);
      \end{tikzpicture}}
      \caption{Zones of interest (dotted) in the objective space for the dominance move search strategy for different configuration of incumbent points \(\mathbf{x}^k\) in an iterate list \(L^k\) for a biobjective minimization problem.}
      \label{fig:zones_interest_DoM_strategy}
  \end{figure}

\subsection{Subsolvers algorithms}\label{subsec:subsolvers_algo}

This section details the single-objective subsolvers used by the new search strategies presented in Sections~\ref{subsec:MultiMADS_search_strategy} and~\ref{subsec:DoM_search_strategy}.

\subsubsection{The quadratic model search step}

Quadratic models are commonly used in single-objective DFO to locally approximate blackbox functions (see for example~\cite{CoLed2011, CuVi07}). This work uses quadratic regression models inspired by~\cite{CoLed2011}, but other models could also have been considered: e.g., minimum Frobenius norm models~\cite{CuVi07}, second directional derivative-based update Hessian models~\cite{BuOlTu2015}.

Consider the following quadratic model
\[Q(\mathbf{x}) = \alpha_0 + \mathbf{g}^\top \mathbf{x} + \dfrac{1}{2} \mathbf{x}^\top \mathbf{H} \mathbf{x},\]
of a true function \(f_\text{bb}\), where \(\alpha_0 \in \mathbb{R}\), \(\mathbf{g} \in \mathbb{R}^n\) and \(\mathbf{H} \in \mathbb{R}^{n \times n}\) symmetric are the unknown parameters to identify. These \(p = \frac{(n+1) (n+2)}{2}\) parameters are determined by solving the following least-square problem with respect to the \(\left\|.\right\|_2\) norm:
\[\min_{(\alpha_0, \mathbf{g}, \mathbf{H})} \left\|Q(X_\text{sample}) - f_\text{bb}(X_\text{sample})\right\|^2_2,\]
where \(Q(X_\text{sample})\) and \(f_\text{bb}(X_\text{sample})\) are defined as:
\[Q(X_\text{sample}) = \begin{bmatrix} Q(\mathbf{x}^1) \\ Q(\mathbf{x}^2) \\ \vdots \\ Q(\mathbf{x}^q)\end{bmatrix} \text{ and } f_\text{bb}(X_\text{sample}) = \begin{bmatrix} f_\text{bb}(\mathbf{x}^1) \\ f_\text{bb}(\mathbf{x}^2) \\ \vdots \\ f_\text{bb}(\mathbf{x}^q)\end{bmatrix}\]
with \(q = |X_\text{sample}| \geq p\) and \(X_\text{sample} = \{\mathbf{x}^1, \mathbf{x}^2, \ldots, \mathbf{x}^q\}\) is a set of points that have previously been evaluated by the true function \(f_\text{bb}\).

If the sample set has good properties, i.e., the so-called \(\Lambda_R\)-poisedness condition for quadratic regression (\cite[Definition 4.7]{CoScVibook}), one can show that the error between the true function value and its quadratic regression model, as well as their gradients and Hessians is bounded by some relative constant depending on \(p\), \(\Lambda_R\) and the radius of a ball containing the sample set \(X_\text{sample}\) (\cite[Theorem 4.13]{CoScVibook}).

At iteration \(k\), the quadratic model search step tries to construct \(m + 1\) quadratic regression models: one for a given scalarization single-objective formulation \(\psi\) function and one for each \(c_j, j \in \mathcal{J}\). These models, respectively denoted as \(Q_{\psi}\) and \(Q_{c_j}, j \in \mathcal{J}\), are supposed to be local approximations of their respective true functions. In a certain region \(\mathcal{B}\), one could hope that these models satisfy:
\[Q_{\psi}(x) \approx \psi(\mathbf{x}) \text{ and } Q_{c_j}(\mathbf{x}) \approx c_j(\mathbf{x}), \text{ for all } j \in \mathcal{J}, \text{ for all } \mathbf{x} \in \mathcal{B},\]
and may be used to guide the search towards new interesting solutions.

At iteration \(k\), consider one of the two incumbent iterate solutions and denote it by \(\mathbf{x}^{k, \text{c}}\). The quadratic model search step starts by collecting a sample set of already evaluated points having finite objective values and constraints in a ball \(\mathcal{B}_{\infty}(\mathbf{x}^{k, \text{c}}, \rho \Delta^k)\), i.e.,
\[X^k_{sample} \subset \mathcal{X} \cap V^k \cap \mathcal{B}_{\infty}(\mathbf{x}^{k, \text{c}}, \rho \Delta^k),\]
where \(\rho > 1\) is a radius factor. If \(q = |X^k_\text{sample}| < p = \frac{(n+1)(n+2)}{2}\), the procedure stops, because there are not enough points to build the model.

Otherwise, the quadratic model search step tries to construct the \(m+1\) quadratic regression models using \(X^k_\text{sample}\). If the least-square resolution fails for at least one model, the procedure stops. Note that the procedure does not assess the quality of the sample set.

Finally, using these models, the quadratic model search step solves the single-objective subproblem:
\[\displaystyle\min_{\mathbf{x} \in \mathbb{R}^n} Q_\psi(\mathbf{x})
\text{ s.t. } \begin{cases} Q_{c_j}(\mathbf{x}) \leq 0, \ j \in \mathcal{J}, \\
\mathbf{x} \in \mathcal{X}, \\
\mathbf{x} \in \mathcal{B}_{\infty}(\mathbf{x}^{k, \text{c}}, \rho \Delta^k).
\end{cases}\]
The solution of these subproblems is then evaluated by the blackbox function after projection onto mesh \(M^k\). Depending on the search strategy, \(\psi\) may correspond to:
\begin{itemize}
    \item \(f_i\) for \(i = 1, 2, \ldots, m\) and \(\bar{\psi}_{r^k}\) for the MultiMADS search strategy;
    \item \(\psi^\text{Do}_{R^k}\) for the dominance move search strategy.
\end{itemize}

This subsection concludes with two remarks about the quadratic model search.

\begin{remark}
Contrary to the single-objective case~\cite{CoLed2011}, the quadratic model search only builds quadratic models around a current incumbent solution, corresponding to the primary frame center of iteration \(k\)~\cite{G-2022-10} (which may be different from the current iterate incumbent). This is done for computational time considerations because the resolution of the quadratic subproblem may be costly.
\end{remark}

\begin{remark}
When \(\psi \in \{\bar{\psi}_{r^k}, \psi^\text{Do}_{Y^k}\}\), the resulting model is not built by combining the quadratic models \(Q_{f_i}\), but directly by computing a regression on \(\psi\). Although this is less precise than the first option because structure is lost, it simplifies the resolution of the subproblem, as it is always a quadratically constrained quadratic problem with bounds on the variables. Furthermore, it improves computation time.
\end{remark}

\subsubsection{The Nelder-Mead search step}

The NM search step for multiobjective optimization follows the procedure described in the single-objective counterpart of this work~\cite{AuTr2018}. The definitions are first stated before adapting the strategy to multiobjective optimization problems. The NM strategy relies on a strict ordering of decision vectors evaluated by the algorithm.

\begin{definition}[adapted from~\cite{AuDe09a}]
    Let \(\psi : \mathbb{R}^n \rightarrow \mathbb{R} \cup \{+ \infty\}\) a scalar-valued function and two decision vectors \(\mathbf{x}^1\) and \(\mathbf{x}^2\) in \(\mathcal{X}\). The decision vector \(\mathbf{x}^1 \in \mathcal{X}\) is said to \(\psi\)-dominate the decision vector \(\mathbf{x}^2 \in \mathcal{X}\) (denoted as \(\mathbf{x}^1 \prec_{\psi} \mathbf{x}^2\)) if
    \begin{itemize}
        \item both points are feasible and \(\psi(\mathbf{x}^1) < \psi(\mathbf{x}^2)\);
        \item both points are infeasible and \(\psi(\mathbf{x}^1) \leq \psi(\mathbf{x}^2)\) and \(h(\mathbf{x}^1) \leq h(\mathbf{x}^2)\) with at least one inequality strictly satisfied.
    \end{itemize}
\end{definition}

The following definition is required to be able to distinguish two points that have the same \(\psi\) and \(h\) value.

\begin{definition}[from \cite{AuTr2018}]
    A point \(\mathbf{x}^1 \in \mathbb{R}^n\) is \textit{older} than a point \(\mathbf{x}^2 \in \mathbb{R}^n\) if it was generated before \(\mathbf{x}^2\) by an algorithm. The function \(\texttt{older}: \mathbb{R}^n \times \mathbb{R}^n \rightarrow \mathbb{R}^n\) defined as
    \[\texttt{older}(\mathbf{x}^1, \mathbf{x}^2) = \begin{cases}
        \mathbf{x}^1 & \text{if } \mathbf{x}^1 \text{ is older than } \mathbf{x}^2, \\
        \mathbf{x}^2 & \text{otherwise},
    \end{cases}\]
    returns the oldest of two points.
\end{definition}

The NM search step depends on a comparison function defined as follows.

\begin{definition}[adapted from~\cite{AuTr2018}]
Let \(\psi : \mathbb{R}^n \rightarrow \mathbb{R} \cup \{+ \infty\}\) be a scalar-valued function. The function \(\texttt{best}_\psi : \mathbb{R}^n \times \mathbb{R}^n \rightarrow \mathbb{R}^n\)
\[\texttt{best}_\psi(\mathbf{x}^1, \mathbf{x}^2) = \begin{cases}
    \mathbf{x}^1 & \text{if } \mathbf{x}^1 \prec_\psi \mathbf{x}^2 \text{ or } h(\mathbf{x}^1) < h(\mathbf{x}^2), \\
    \mathbf{x}^2 & \text{if } \mathbf{x}^2 \prec_\psi \mathbf{x}^1 \text{ or } h(\mathbf{x}^2) < h(\mathbf{x}^1), \\
    \texttt{older}(\mathbf{x}^1, \mathbf{x}^2) & \text{if } \psi(\mathbf{x}^1) = \psi(\mathbf{x}^2) \text{ or } h(\mathbf{x}^1) = h(\mathbf{x}^2),
\end{cases}\]
returns the best of two points \(\mathbf{x}^1\) and \(\mathbf{x}^2\) in \(\mathbb{R}^n\). The point \(\mathbf{x}^1\) is said to be better than \(\mathbf{x}^2\) relatively to \(\psi\) if \(\mathbf{x}^1 = \texttt{best}_\psi(\mathbf{x}^1, \mathbf{x}^2)\).
\end{definition}

Using~\cite[Proposition 4.3]{{AuTr2018}}, one can show that \(\texttt{best}_\psi\) is transitive over the set of trial points. 

Algorithm~\ref{alg:NM_search_procedure} offers a high-level description of the NM subproblem procedure for a particular function \(\psi\). It is described next in more details.

\begin{figure}[!th]
  \begin{algorithm}[H]\small
   \caption{A high-level description of the NM subproblem procedure using \(\psi\) (adapted from~\cite{AuTr2018}).}
   \begin{algorithmic}
    \STATE \textbf{Construct an initial ordered simplex \(\mathbb{X}_{s}\)}. Using the set \(\mathbb{T}_{\pi_{\text{radius}}}\) and the function \(\texttt{best}_\psi\), try to construct an ordered simplex \(\mathbb{X}_s \subseteq \mathbb{T}_{\pi_\text{radius}}\). If \(|\mathbb{X}_s| \neq n+1\), stop the search.
    \STATE \textbf{Update the simplex}. If \(\mathbb{X}_s\) is not a simplex, go to \textit{Termination}. Otherwise, reorder \(\mathbb{X}_s = \{\mathbf{x}^0, \mathbf{x}^1, \ldots, \mathbf{x}^n\}\) using \(\texttt{best}_\psi\). 
    \STATE \textbf{Determine a new candidate vertex \(\mathbf{t}\)}. Generate Nelder-Mead candidates \(\{\mathbf{x}^\text{r}_{\oplus}, \mathbf{x}^\text{e}_{\oplus}, \mathbf{x}^\text{ic}_{\oplus}, \mathbf{x}^\text{oc}_{\oplus}\} \subset M^k\). Following~\cite[Algorithm 4]{AuTr2018}, if \(\mathbb{X}_s\) is to be shrunk, go to \textit{Termination}. Otherwise, update \(\mathbf{t} \in \{\mathbf{x}^\text{r}_{\oplus}, \mathbf{x}^\text{e}_{\oplus}, \mathbf{x}^\text{ic}_{\oplus}, \mathbf{x}^\text{oc}_{\oplus}\}\). 
    \STATE \textbf{Replace the worst point}. If \(\mathbf{t} \in V^k\), go to \textit{Termination}. Otherwise, set \(\mathbf{x}^n := \mathbf{t}\) and go to \textit{Update the simplex}.
    \STATE \textbf{Termination}: Collect all solution points \(S^\text{tmp} \subset \mathcal{X} \cap M^k\) evaluated by the NM subproblem procedure. Set \(S^k := S^k \cup S^\text{tmp}\). Return.
   \end{algorithmic}
   \label{alg:NM_search_procedure}
  \end{algorithm}
\end{figure}

At iteration \(k\), given the incumbent iterate \(\mathbf{x}^k, \Delta^k \in L^k\), and the cache \(V^k\), \(\mathbb{T}_{\pi_\text{radius}}\) is defined as:
\[\mathbb{T}_{\pi_\text{radius}} = \{\mathbf{x} \in V^k: \|\mathbf{x} - \mathbf{x}^k\|_{\infty} \leq \pi_{\text{radius}} \Delta^k\},\]
where \(\pi_{\text{radius}} \geq 1\) is a parameter controlling the size of the zone in which points can be collected. Using \(\texttt{best}_{\psi}\), the NM procedure tries to construct an ordered simplex \(\mathbb{X}_s = \{\mathbf{x}^0, \mathbf{x}^1, \ldots, \mathbf{x}^{n}\} \subseteq \mathbb{T}_{\pi_\text{radius}}\) such that \(\mathbf{x}^{j-1} = \texttt{best}_{\psi}(\mathbf{x}^{j-1}, \mathbf{x}^j)\) for \(j = 1, 2, \ldots, n\).

If \(\mathbb{X}_s\) is valid, it contains \(n+1\) elements. The NM procedure constructs the following points from \(\mathbb{X}_s\) using the standard NM algorithm:
\[\begin{cases} \mathbf{x}^\text{c} = \frac{1}{n} \sum_{j=0}^{n-1} \mathbf{x}^j & \text{the centroid of the } n \text{ best points}; \\
\mathbf{x}^\text{r} = \mathbf{x}^\text{c} + (\mathbf{x}^\text{c} - \mathbf{x}^n) & \text{the reflection candidate}; \\
\mathbf{x}^\text{e} = \mathbf{x}^\text{c} + \delta^\text{e} (\mathbf{x}^\text{c}  - \mathbf{x}^n) & \text{the expansion candidate with } \delta^\text{e} \in ]1, + \infty[; \\
\mathbf{x}^\text{oc} = \mathbf{x}^c + \delta^\text{oc} (\mathbf{x}^c - \mathbf{x}^n) & \text{the outside-contraction candidate with } \delta^\text{oc} \in ]0, 1[ \text{ and} \\
\mathbf{x}^\text{ic} = \mathbf{x}^c + \delta^\text{ic} (\mathbf{x}^\text{c} - \mathbf{x}^n) & \text{the inside-contraction candidate with } \delta^{ic} \in ]-1, 0[. \\
\end{cases}\]
The NM procedure uses the projections of \(\mathbf{x}^\text{r}\), \(\mathbf{x}^\text{e}\), \(\mathbf{x}^\text{oc}\), and \(\mathbf{x}^\text{ic}\) on the mesh \(M^k\), respectively \(\mathbf{x}^\text{r}_{\oplus}\), \(\mathbf{x}^\text{e}_{\oplus}\), \(\mathbf{x}^\text{oc}_{\oplus}\), and \(\mathbf{x}^\text{ic}_{\oplus}\) as potential candidates to evaluate.

The NM procedure performs a succession of iterative steps, where it tries to replace the worst point \(\mathbf{x}^n\) of the simplex \(\mathbb{X}_s\) by a new candidate \(\mathbf{t} \in \{\mathbf{x}^\text{r}_{\oplus}, \mathbf{x}^\text{e}_{\oplus}, \mathbf{x}^\text{ic}_{\oplus}, \mathbf{x}^\text{oc}_{\oplus}\}\). This substitution procedure is detailed in~\cite{AuTr2018}.

The NM procedure stops when:
\begin{itemize}
    \item it tries to evaluate a point already generated during the previous iterations, i.e., \(\mathbf{t} \in V^k\);
    \item \(\mathbb{X}_s\) is not a simplex anymore, due to the projection on the mesh;
    \item when the NM procedure tries to enter in the shrinking phase of the standard NM algorithm~\cite{AuTr2018} or;
    \item after the NM procedure reaches a certain number of blackbox evaluations.
\end{itemize}
The NM search collects all evaluated points as potential candidates to update the iterate list. For more details, the reader is referred to~\cite{AuTr2018}.

\section{Computational experiments}
\label{sect:Numerical_experiments}

This section is dedicated to the numerical experiments of new search strategies for DMulti-MADS. The first part presents the considered variants of DMulti-MADS. The second part evaluates the performance of these variants on (bound-)constrained analytical benchmarks using data profiles for multiobjective optimization. The last part shows the benefit of such search strategies on two biobjective~\cite{solar_paper} and one triobjective~\cite{AuSaZg2010a} engineering applications using convergence profiles.

This work assesses the performance of multiobjective DFO algorithms by adapting data~\cite{MoWi2009} and convergence profiles to the multiobjective context~\cite{BiLedSa2020, G-2022-10}. Both tools require a convergence test based on the hypervolume indicator~\cite{Zitzler1998}.

Given a Pareto front approximation \(Y_N \subset \mathbb{R}^m\), the hypervolume indicator of \(Y_N\) is the measure of the space in the objective space dominated by \(Y_N\) and bounded above by a objective vector \(\mathbf{u} \in \mathbb{R}^m\) such that for all \(\mathbf{y} \in Y_N\), \(\mathbf{y} \leq \mathbf{u}\). Formally,
\[HV(Y_N, \mathbf{r}) = \Lambda \left(\left\{\mathbf{v} \in \mathbb{R}^m \ | \ \exists \mathbf{y} \in Y_N : \mathbf{y} \leq \mathbf{v} \text{ and } \mathbf{v} \leq \mathbf{u}\right\}\right) = \Lambda\left(\bigcup_{\mathbf{y} \in Y_N} \left[\mathbf{y}, \mathbf{u}\right]\right),\]
where \(\Lambda(.)\) denotes the Lebesgue measure of a \(m\)-dimensional set of points. Pareto-compliant with the dominance order~\cite{Zitzler2003} and intuitive to understand, the hypervolume indicator captures several properties of a Pareto front approximation, such as cardinality, convergence, spread, and extent. All these points make the hypervolume indicator a reasonable choice for assessing the quality of a Pareto front approximation.

Given a set of problems \(\mathcal{P}\), the convergence test for a problem \(p \in \mathcal{P}\) requires a Pareto front approximation reference \(Y^p\). In a DFO context, the analytical Pareto front is not available. \(Y^p\) is then constructed by taking the union of the best feasible non-dominated points found by all considered solvers on problem \(p \in \mathcal{P}\) for a maximum budget of function evaluations, from which new dominated points are removed. If no solver manages to find a feasible solution, \(p \in \mathcal{P}\) is discarded from \(\mathcal{P}\). Once \(Y^p\) is computed, the approximated ideal objective vector
\[\tilde{\mathbf{y}}^{\text{I}, p} = \left(\min_{\mathbf{y} \in Y^p} y_1, \min_{\mathbf{y} \in Y^p} y_2,  \ldots, \min_{\mathbf{y} \in Y^p} y_{m_p}\right)^\top,\]
and the approximated nadir objective vector
\[\tilde{\mathbf{y}}^{\text{N}, p} = \left(\max_{\mathbf{y} \in Y^p} y_1, \max_{\mathbf{y} \in Y^p} y_2,  \ldots, \max_{\mathbf{y} \in Y^p} y_{m_p}\right)^\top\]
are extracted, with \(m_p\) the number of objectives associated to problem \(p \in \mathcal{P}\).

Let \(Y^e\) be the Pareto front approximation obtained by a deterministic algorithm on problem \(p \in \mathcal{P}\) after \(e\) blackbox evaluations. To avoid privileging one objective against another, one applies the transformation \(T: \mathbb{R}^m \rightarrow \mathbb{R}^m\) to each element of \(Y^e\) and \(Y^p\), and \(\tilde{\mathbf{y}}^{\text{N}, p}\), defined as
\[T(\mathbf{y}) = \begin{cases}
(\mathbf{y} - \tilde{\mathbf{y}}^{\text{I}, p}) \oslash (\tilde{\mathbf{y}}^{\text{N}, p} - \tilde{\mathbf{y}}^{\text{I},p}) & \text{if } \tilde{\mathbf{y}}^{\text{N}, p} \neq \tilde{\mathbf{y}}^{\text{I},p} \\
\mathbf{y} - \tilde{\mathbf{y}}^{\text{I}, p} & \text{otherwise},
\end{cases}\]
where \(\oslash\) is the Hadamard division operator.

Given a Pareto front approximation \(Y\), let denote
\[T(Y) = \left\{\mathbf{v} \in \mathbb{R}^m : \mathbf{v} = T(\mathbf{y}) \text{ for } \mathbf{y} \in Y \right\}.\]
A computational problem \(p \in \mathcal{P}\) is said to be solved by an algorithm with tolerance \(\varepsilon_\tau > 0\) if
\[\dfrac{HV\left(T(Y^e), T(\tilde{\mathbf{y}}^{\text{N}, p})\right)}{HV\left(T(Y^p), T(\tilde{\mathbf{y}}^{\text{N}, p})\right)} \geq 1 - \varepsilon_\tau.\]
Note that all elements of \(T(Y^e)\) dominated by \(T(\tilde{\mathbf{y}}^{\text{N}, p})\) do not contribute to the hypervolume and need to be removed before the computation. If the solver does not find at least one transformed objective vector that dominates \(T(\tilde{\mathbf{y}}^{\text{N}, p})\), \(HV\left(T(Y^e), T(\tilde{\mathbf{y}}^{\text{N}, p})\right)\) is set to \(0\).

Data profiles show the proportion of computational problems solved by an algorithm according to the number of groups of \(n+1\) function evaluations, i.e., the number of points required to build a gradient simplex in \(\mathbb{R}^n\).

\subsection{Tested DMulti-MADS variants}

The considered variants are:
\begin{itemize}
    \item DMulti-MADS \texttt{basic}: DMulti-MADS with an only speculative search, as described in~\cite{BiLedSa2020, G-2022-10}.
    \item DMulti-MADS \texttt{NM-DoM}: DMulti-MADS \texttt{basic} with the NM dominance move search strategy.
    \item DMulti-MADS \texttt{NM-Multi}: DMulti-MADS \texttt{basic} with the NM MultiMADS search strategy.
    \item DMulti-MADS \texttt{Quad-DoM}: DMulti-MADS \texttt{basic} with the quadratic dominance move search strategy.
    \item DMulti-MADS \texttt{Quad-Multi}: DMulti-MADS \texttt{basic} with the quadratic MultiMADS search strategy.
\end{itemize}
This work implements an additional quadratic search strategy adapted from~\cite{BraCu2020} called DMulti-MADS \texttt{Quad-DMS}. The procedure is given in Algorithm~\ref{alg:Quad_DMS_search_procedure}.

\begin{figure}[!th]
  \begin{algorithm}[H]\small
   \caption{A high-level description of the quadratic DMS search strategy  (adapted from~\cite{BraCu2020}).}
   \begin{algorithmic}
    \STATE \textbf{Initialization}. Given a current frame incumbent center \(\mathbf{x}^k\), its associated frame size parameter \(\Delta^k > 0\), \(\rho > 1)\) the radius factor, select a set of points \(V^{Q, k} \subseteq V^k \cap B_{\infty}(\mathbf{x}^k, \rho \Delta^k)\). Initialize the counter \(l = 0\).
    \WHILE{\(l < m\)}
        \STATE Set \(l := l + 1\). Let \(\mathcal{I}\) the set of all combinations of \(l\) quadratic models taken from the total set of \(m\) models. Set \(S := \emptyset\).
        \FOR{\(I \in \mathcal{I}\)}
            \STATE Approximately solve
            \[\mathbf{s}_I \approx \arg \min_{\mathbf{x} \in \mathbb{R}^n} Q_{\psi_{I}}(\mathbf{x})
                \text{ s.t. } \begin{cases} Q_{c_j}(\mathbf{x}) \leq 0, \ j \in \mathcal{J}, \\
                \mathbf{x} \in \mathcal{X}, \\
                \mathbf{x} \in \mathcal{B}_{\infty}(\mathbf{x}^{k, \text{c}}, \rho \Delta^k).
                \end{cases}\]
            where \(Q_{\psi_I}\) is the quadratic model associated to \(\psi_I : \mathbf{x} \mapsto \displaystyle\max_{i \in I} f_i(\mathbf{x})\), built from \(V^{Q,k}\).
            Project \(\mathbf{s}_I\) onto \(M^k\) and update \(S := S \cup \{\mathbf{s}_I\}\).
        \ENDFOR
        \STATE \textbf{Check for success}. Evaluate \(f\) and \(h\) at each point of \(S\). Update \(V^{Q, k} := V^{Q, k} \cup S\) and \(S^k := S^k \cup S\). Compute \(L^{\text{trial}}\) by removing all dominated points from \(L^k \cup \left\{(\mathbf{s}, \Delta) : \mathbf{s} \in S \text{ and } \Delta \in \left\{\Delta^k, \tau^{-1} \Delta^k \right\}\right\}\). If \(L^{\text{trial}} \neq L^k\), return.
    \ENDWHILE
   \end{algorithmic}
   \label{alg:Quad_DMS_search_procedure}
  \end{algorithm}
\end{figure}

At iteration \(k\), the quadratic DMS search procedure incrementally solves successive quadratically constrained quadratic subproblems, starting from the minimization of each individual objective quadratic model (i.e., the level \(l = 1\)) until the minimization of all objective quadratic models simultaneously (i.e., the level \(l = m\)). When all objective combinations for a given level \(l\) are exhausted, the quadratic DMS search procedure checks whether a new non-dominated solution has been generated. If so, the procedure stops. Otherwise, the procedure moves to the next level. In the worst case, this search iterates over all levels, resulting in the optimization of \(2^m - 1\) subproblems. This issue remains limited for a small number of objectives, i.e., \(m \leq 4\).

Note that contrary to~\cite{BraCu2020}, a success for the quadratic DMS search is not equivalent to a success for the complete DMulti-MADS iteration. In particular, DMulti-MADS may continue with another search step or poll step even if the quadratic DMS search is successful.

All of these search strategies are implemented in the \nomad software~\cite{nomad4paper}, version \texttt{4.5.0}, which can be found at~\url{https://github.com/bbopt/nomad}. For the rest of this work, all variants use an opportunistic strategy and an OrthoMads poll step with \(n+1\) directions without quadratic models~\cite{AuIaLeDTr2014}. The search strategies use the same default parameters as their single-objective counterparts implemented in \nomad. For example, all quadratically constrained quadratic subproblems are solved with MADS.

An implementation of the benchmark functions used in the next subsection can be found
at~\url{https://github.com/bbopt/DMultiMads_search_benchmarks}.

\subsection{Computational experiments on synthetic benchmarks}

In this subsection, this work considers two analytical benchmark sets:
\begin{itemize}
    \item a set of \(100\) bound-constrained multiobjective optimization problems taken from~\cite{CuMaVaVi2010}, with \(n \in [1, 30]\) and \(m \in \{2,3,4\}\);
    \item a set of \(214\) constrained multiobjective optimization problems proposed in~\cite{LiLuRi2016}, based on the previous benchmark suite, with \(n \in [3, 30]\), \(m \in \{2,3,4\}\) and \(|\mathcal{J}| \in [3, 30]\).
\end{itemize}
For each problem \(p \in \mathcal{P}\), each variant is run once with a maximal budget of $30,000$ blackbox evaluations, starting from the same \(n_p\) points by dividing the line connecting the lower bound and upper bound
into \(n_p\) equal spaces~\cite{CuMaVaVi2010}, with \(n_p\) the number of variables of problem \(p \in \mathcal{P}\).

\begin{figure}[!ht]
		\centering
		\subfigure[\(\varepsilon_{\tau}=10^{-2}\)]{
			\includegraphics[scale=0.7]{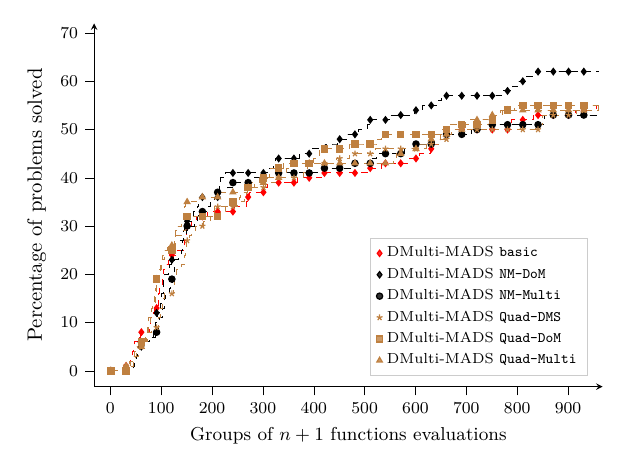}}
		\quad
		\subfigure[\(\varepsilon_{\tau}=5 \times 10^{-2}\)]{
			\includegraphics[scale=0.7]{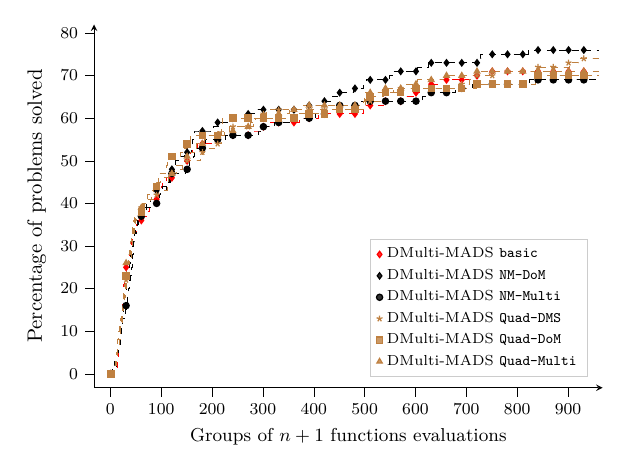}}
		\quad
		\subfigure[\(\varepsilon_{\tau}=10^{-1}\)]{
			\includegraphics[scale=0.7]{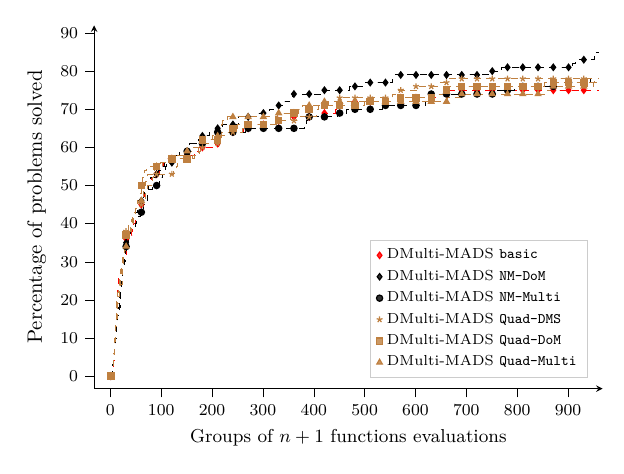}}
		\caption{Data profiles obtained from 100 multiobjective bound-constrained analytical problems taken from~\cite{CuMaVaVi2010} for DMulti-MADS \texttt{basic}, \texttt{NM-DoM}, \texttt{NM-Multi}, \texttt{Quad-DMS}, \texttt{Quad-DoM}, and \texttt{Quad-Multi} variants with tolerance \(\varepsilon_\tau \in \{10^{-2}, 5 \times 10^{-2}, 10^{-1}\}\).}
		\label{fig:dmultimads_variants_synthetic_benchmarks_unconstrained}
\end{figure}

The data profiles in Figure~\ref{fig:dmultimads_variants_synthetic_benchmarks_unconstrained} show that the addition of such search strategies has a positive impact on the performance of DMulti-MADS for all tolerances \(\varepsilon_\tau \in \{10^{-2}, 5 \times 10^{-2}, 10^{-1}\}\). The NM dominance move search strategy is the most efficient on the bound-constrained benchmark suite for all tolerances for a medium to large budget of blackbox evaluations. It solves up to \(8-10\%\) more problems for the lowest tolerance \(\varepsilon_\tau = 10^{-2}\) and about \(5\%\) more problems for \(\varepsilon_\tau \in \{5 \times 10^{-2}, 10^{-1}\}\) than DMulti-MADS \texttt{basic}. It is followed by DMulti-MADS \texttt{Quad-DoM} and \texttt{Quad-DMS}, which perform better than DMulti-MADS \texttt{basic} for medium to high tolerance \(\varepsilon_\tau\) considered.

\begin{figure}[!ht]
		\centering
		\subfigure[\(\varepsilon_{\tau}=10^{-2}\)]{
			\includegraphics[scale=0.7]{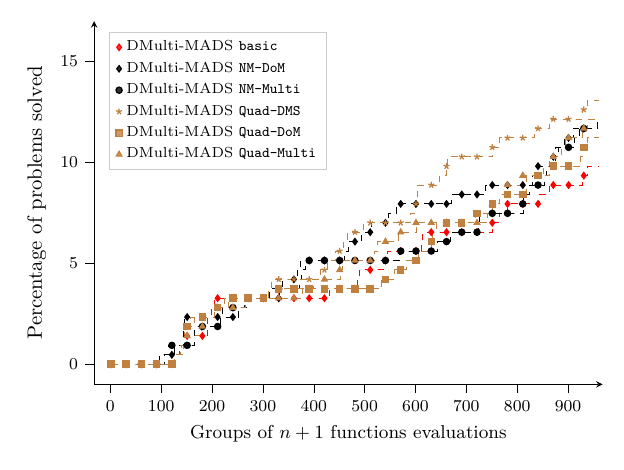}}
		\quad
		\subfigure[\(\varepsilon_{\tau}=5 \times 10^{-2}\)]{
			\includegraphics[scale=0.7]{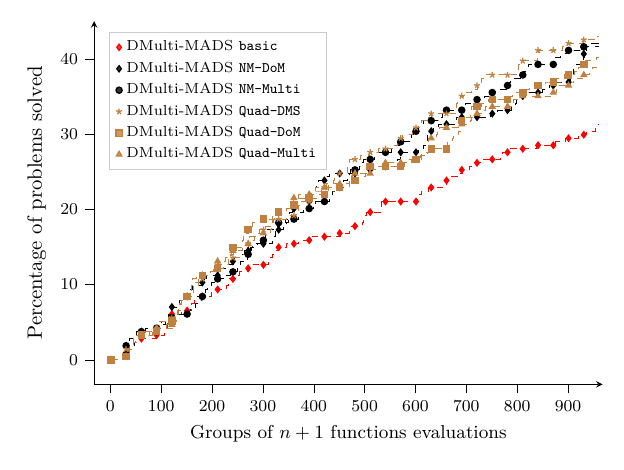}}
		\quad
		\subfigure[\(\varepsilon_{\tau}=10^{-1}\)]{
			\includegraphics[scale=0.7]{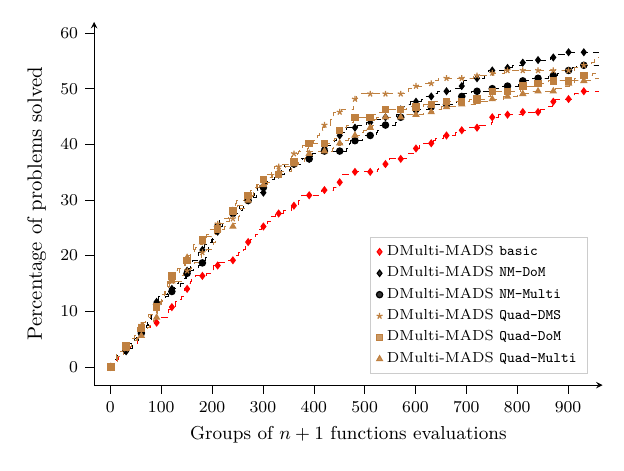}}
		\caption{Data profiles obtained from 214 multiobjective analytical problems taken from~\cite{LiLuRi2016} for DMulti-MADS \texttt{basic}, \texttt{NM-DoM}, \texttt{NM-Multi}, \texttt{Quad-DMS}, \texttt{Quad-DoM}, and \texttt{Quad-Multi} variants with tolerance \(\varepsilon_\tau \in \{10^{-2}, 5 \times 10^{-2}, 10^{-1}\}\).}
		\label{fig:dmultimads_variants_synthetic_benchmarks_constrained}
\end{figure}

This performance gain is more noticeable when solving constrained analytical problems. As shown in Figure~\ref{fig:dmultimads_variants_synthetic_benchmarks_constrained}, all variants with quadratic or NM search strategies enabled perform better than DMulti-MADS \texttt{basic} for all evaluations budgets for medium (\(\varepsilon_\tau = 5 \times 10^{-2}\)) to high tolerance (\(\varepsilon_\tau = 10^{-1}\)), up to \(5\%\) more problems solved for the less efficient search strategy. Note, however, that DMulti-MADS has some difficulty with this benchmark set, solving less than \(60\%\) of the problems for the lowest tolerance \(\varepsilon_\tau = 10^{-1}\). DMulti-MADS \texttt{Quad-DMS} is slightly more efficient than the other variants for a medium to large budget of evaluations in this case for all considered tolerances.

\subsection{Computational experiments on practical engineering problems}

To validate the performance of the new search strategies, this work considers three ``real-world'' engineering applications: two biobjective problems, {\sf solar 8.1} and {\sf solar 9.1}~\cite{solar_paper}, and one triobjective application STYRENE~\cite{AuSaZg2010a}. To follow the progression of the different DMulti-MADS variants, this work uses convergence profiles for multiobjective optimization. Because the evaluation of such blackboxes is more expensive than for previous benchmark sets, computing data profiles cannot be performed in a reasonable time for this research.

Convergence profiles track the progression of a multiobjective optimization algorithm along the iterations by computing the following normalized hypervolume value
\[\dfrac{HV\left(T(Y^e), T(\tilde{\mathbf{y}}^{\text{N}, p})\right)}{HV\left(T(Y^p), T(\tilde{\mathbf{y}}^{\text{N}, p})\right)},\]
as a function of the number of blackbox evaluations \(e\), where \(Y^p\) is the Pareto front reference for problem \(p\), and \(Y^e\) is the Pareto front approximation obtained at evaluation \(e\) by the algorithm. The normalized hypervolume indicator is monotonically increasing and bounded above by 1. As mentioned previously at the beginning of Section~\ref{sect:Numerical_experiments}, a normalized hypervolume value equal to 0 indicates that a method has not generated a feasible objective vector that is dominated by the approximated nadir objective vector \(\tilde{\mathbf{y}}^{\text{N}, p}\) of problem \(p\).

{\sf solar 8.1} and {\sf solar 9.1} are two deterministic biobjective problems derived from the design of a solar plant system modeled by a numerical simulation~\cite{solar_paper}. The simulator consists of three subsystems. The heliostats field collects solar rays that are sent to a receiver. The second subsystem converts the heat generated by the receiver into thermal energy. The power-block uses this energy to produce water steam, which rotates turbines to generate electricity. Interested readers are referred to~\cite{solar_paper} for more details. {\sf solar~8.1} aims to maximize heliostats field performance and minimize cost. It possesses 13 variables, 2 of which are integer. {\sf solar 9.1} aims to maximize power production and minimize cost. It possesses 29 variables, 7 of which are integer.

DMulti-MADS does not handle integer variables.  Then, for both problems, integer values are kept constant to their initial value (they can be found at~\cite{G-2022-10}), resulting in blackboxes with 11 and 22 continuous input variables. All variants are run with a budget of $5,000$ evaluations and start from the same infeasible initial point.

\begin{figure}[!th]
\centering
\subfigure[]{\includegraphics[scale=0.7]{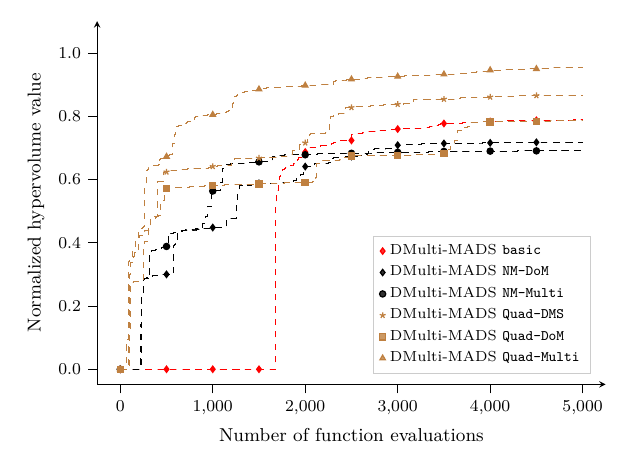}
             \label{fig:solar8_convergence_profiles}}
\quad
\subfigure[]{\includegraphics[scale=0.7]{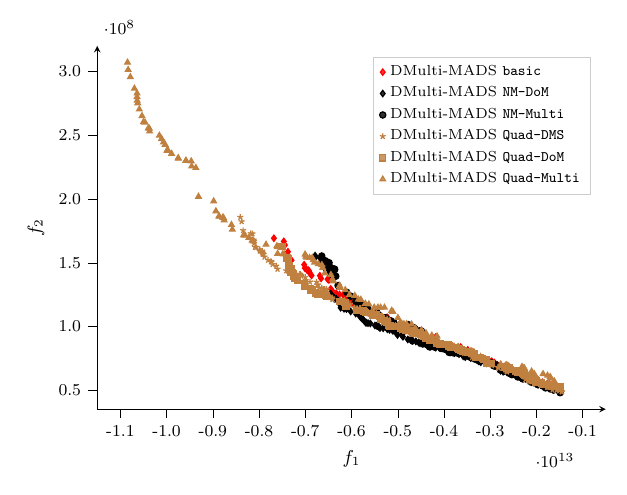}
             \label{fig:solar8_pareto_fronts}}
\caption{(a) On the left, convergence profiles for the {\sf solar 8.1} problem using DMulti-MADS \texttt{basic}, \texttt{NM-DoM}, \texttt{NM-Multi}, \texttt{Quad-DMS}, \texttt{Quad-DoM}, and \texttt{Quad-Multi} variants for a maximal budget of $5,000$ evaluations. (b) On the right, Pareto front approximations obtained at the end of the resolution of {\sf solar 8.1} for DMulti-MADS \texttt{basic}, \texttt{NM-DoM}, \texttt{NM-Multi}, \texttt{Quad-DMS}, \texttt{Quad-DoM}, and \texttt{Quad-Multi} in the objective space.}
\end{figure}

From Figure~\ref{fig:solar8_convergence_profiles}, DMulti-MADS with the quadratic MultiMADS search strategy performs better than the other variants, followed by the quadratic DMS search strategy. Turning to Figure~\ref{fig:solar8_pareto_fronts}, the quadratic MultiMADS search manages to extend the Pareto front approximation towards \(f_1\), by exploiting the nature of the current frame incumbent. The quadratic DMS search strategy is slower to extend the Pareto front approximation in this case, because it is myopic in its exploration of the objective space.

\begin{figure}[!th]
	\centering
	\subfigure[]{\includegraphics[scale=0.7]{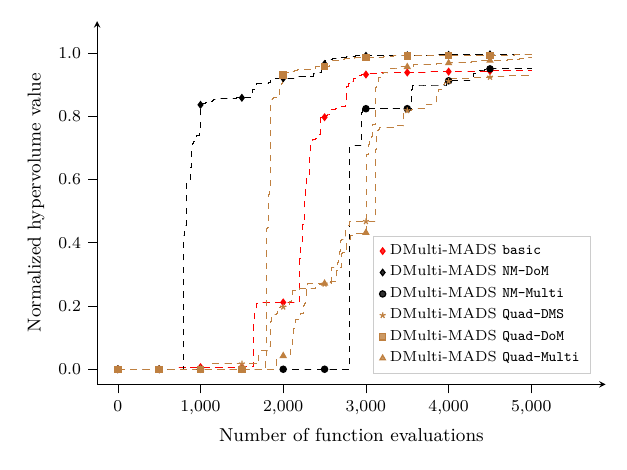}
		\label{fig:solar9_convergence_profiles}}
	\quad
	\subfigure[]{\includegraphics[scale=0.7]{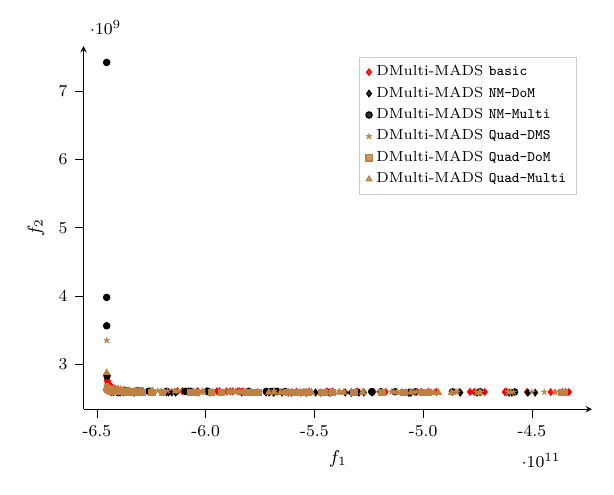}
		\label{fig:solar9_pareto_fronts}}
    	\caption{(a) On the left, convergence profiles for the {\sf solar 9.1} problem using DMulti-MADS \texttt{basic}, \texttt{NM-DoM}, \texttt{NM-Multi}, \texttt{Quad-DMS}, \texttt{Quad-DoM}, and \texttt{Quad-Multi} variants for a maximal budget of $5,000$ evaluations. (b) On the right, Pareto front approximations obtained at the end of the resolution of {\sf solar 9.1} for DMulti-MADS \texttt{basic}, \texttt{NM-DoM}, \texttt{NM-Multi}, \texttt{Quad-DMS}, \texttt{Quad-DoM}, and \texttt{Quad-Multi} in the objective space.}
    \label{fig:solar9_convergence_profiles_and_pareto_fronts}
\end{figure}

Results for {\sf solar 9.1} are presented in Figure~\ref{fig:solar9_convergence_profiles_and_pareto_fronts}. The two dominance move search strategies are the most efficient, as illustrated in Figure~\ref{fig:solar9_convergence_profiles}. The Pareto front approximation in Figure~\ref{fig:solar9_pareto_fronts} is ``flat'', which could favor such strategies. \\

The last problem considered in this subsection is STYRENE~\cite{AuSaZg2010a}. It is based on a numerical simulator that models styrene production, implemented in \texttt{C++}, and available at \url{github.com/bbopt/styrene}. 

The chemical production of styrene involves four steps: reactants preparation, catalytic reactions, and two distillation phases. The first distillation allows the recovery of styrene, while the second is aimed at the recovery of benzene. During this second distillation, one can recycle unreacted ethylbenzaline, which can be reinjected into the styrene production as an initial reactant. The goal is to maximize three objectives simultaneously: the net present value associated with the process (\(f_1\)), the purity of the styrene produced (\(f_2\)) and the overall conversion of ethylbenzene to styrene (\(f_3\)). This problem has eight continuous variables and nine inequality constraints, related to industrial regulations and economic context. Interested readers are referred to~\cite{AuSaZg2010a} for more details. The simulation may crash, i.e., it may not return finite numerical output values, due to the presence of hidden constraints in the blackbox.

All DMulti-MADS variants are affected a maximal budget of $20,000$ blackbox evaluations and start from the same initial feasible point.

\begin{figure}[!th]
	\centering
	\subfigure[]{\includegraphics[scale=0.7]{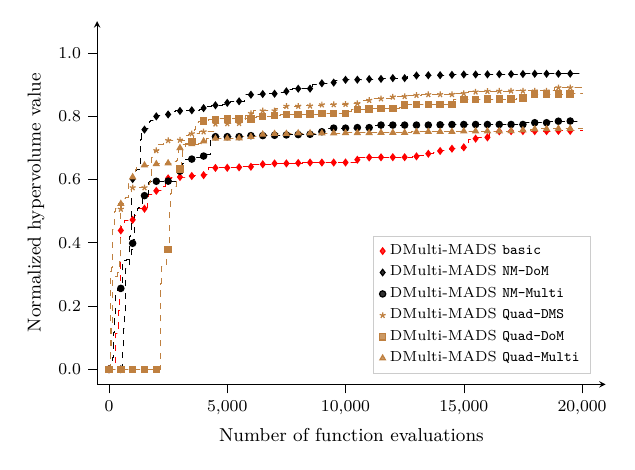}
		\label{fig:styrene_convergence_profiles}}
	\quad
	\subfigure[]{\includegraphics[scale=0.59]{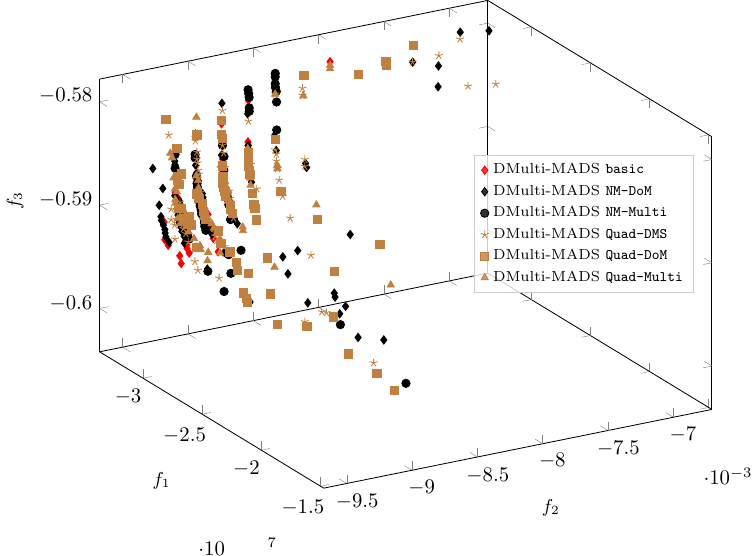}
		\label{fig:styrene_pareto_fronts}}
    \caption{(a) On the left, convergence profiles for the STYRENE problem using DMulti-MADS \texttt{basic}, \texttt{NM-DoM}, \texttt{NM-Multi}, \texttt{Quad-DMS}, \texttt{Quad-DoM}, and \texttt{Quad-Multi} variants for a maximal budget of $5,000$ evaluations. (b) On the right, Pareto front approximations obtained at the end of the resolution of STYRENE for DMulti-MADS \texttt{basic}, \texttt{NM-DoM}, \texttt{NM-Multi}, \texttt{Quad-DMS}, \texttt{Quad-DoM}, and \texttt{Quad-Multi} in the objective space.}
\end{figure}

From Figure~\ref{fig:styrene_convergence_profiles}, one can see that all variants of DMulti-MADS with search strategies outperform DMulti-MADS \texttt{basic}. The dominance move search strategies are the most efficient. The Pareto front approximations obtained on Figure~\ref{fig:styrene_pareto_fronts} show that all search strategies introduced in this work are able to capture a larger part of the Pareto front reference than DMulti-MADS standard algorithm.

\section{Closing Remarks}

This work proposes two search strategies for the DMulti-MADS constrained algorithm for multiobjective optimization, that generalizes previous heuristics proposed for the MADS algorithm~\cite{AuTr2018,CoLed2011,CuRoVi10}. The first uses quadratic models, built from previous evaluated points, that act as surrogates of the objective and constraint functions, providing new promising candidates. The other relies on a Nelder-Mead-based sampling strategy. Both strategies are built around the resolution of single-objective subproblems derived from the original multiobjective problem, in the lineage of~\cite{AuSaZg2008a, AuSaZg2010a, BraCu2020}. This work also introduces a single-objective formulation framework that generalizes~\cite{AuSaZg2008a, AuSaZg2010a}.

Experiments on (bound-)constrained analytical benchmarks and three engineering problems show that the search strategies bring significant improvement over the baseline version of the DMulti-MADS algorithm. They also highlight the fact that none of these strategies is superior for all problems.

\nomad chooses as default search strategies \texttt{NM-DoM} and \texttt{Quad-Multi} because they allow a gain without sacrificing computational time (e.g., in solving quadratic subproblems). A user may modify these default options to obtain better results according to his problem.

We emphasize that these search strategies are not limited to DMulti-MADS and can also be implemented for other direct search algorithms for multiobjective optimization, such as DMS~\cite{CuMaVaVi2010}. Future work includes the exploitation of parallelism, in the continuation of~\cite{Tavares2023}, and the integration of the single-objective formulations in the construction of smarter poll strategies based on quadratic models~\cite{AuIaLeDTr2014}.

\subsection*{Funding}

This work is supported by the NSERC/Mitacs Alliance grant (\#571311--21) in collaboration with Hydro-Qu\'ebec and a postdoctoral fellowship of GERAD (FRQNT--309029).

\subsection*{Conflicts of interest}
The authors report no conflict of interest.

\subsection*{Data availability statement}

The \nomad solver is available at~\url{https://github.com/bbopt/nomad}.
The {\sf solar 8.1} and {\sf solar 9.1} problems are available at~\href{https://github.com/bbopt/solar}{\bl \url{github.com/bbopt/solar}}.
The STYRENE problem is available at~\url{github.com/bbopt/styrene}.
The implementation of the benchmark functions can be found
at~\url{https://github.com/bbopt/DMultiMads_search_benchmarks}.

\bibliographystyle{plain}
\bibliography{bibliography}

\newpage

\end{document}